\documentclass{article}
\usepackage{graphicx}
\usepackage{amsthm, amsmath, amssymb, tikz}
\usepackage{enumitem}
\usepackage{xifthen}

\usepackage[margin=1.5in]{geometry} 

\usetikzlibrary{calc,shapes, backgrounds}

\newtheorem{lemma}{Lemma}
\newtheorem{theorem}{Theorem}
\newtheorem{corollary}{Corollary}

\newtheorem{proposition}{Proposition}

\newcommand{\ds}{\displaystyle}
\newcommand{\cJ}{\mathcal{J}}
\newcommand{\lp}{\left (}
\newcommand{\rp}{\right )}

\pgfdeclarelayer{edgelayer}
\pgfdeclarelayer{nodelayer}
\pgfsetlayers{edgelayer,nodelayer,main}

\title{A note on 1-guardable graphs in the cops and robber game}
\author{
Linyuan Lu
\thanks{University of South Carolina, Columbia, SC 29208,
({\tt lu@math.sc.edu}). This author was supported in part by NSF
grant DMS-1600811.} \and
Zhiyu Wang \thanks{University of South Carolina, Columbia, SC 29208,
({\tt zhiyuw@math.sc.edu}).} 
}
\begin{document}

\maketitle

\begin{abstract}
  In the cops and robber games played on a simple graph $G$, Aigner and Fromme's lemma states that one cop can guard a shortest path in the sense that the robber cannot enter this path without getting caught after finitely many steps. In this paper, we extend Aigner and Fromme's lemma to cover a larger family of graphs and give metric characterizations of these graphs. In particular, we show that a generalization of block graphs, namely vertebrate graphs, are 1-guardable. We use this result to give the cop number of some special class of multi-layer generalized Peterson graphs.
\end{abstract}

\section{Introduction}

The cop(s)-robber game is played by two players on a simple graph $G$ with $n$ vertices. The cop player first puts $c$ pebbles (called cops) on vertices of G. Then the robber player puts one pebble (called the robber) on some vertex of $G$. Two players move alternatively. The cop player can have each cop stay at its current position or move to one of its neighbors. Multiple cops are allowed to be placed on the same vertex. The robber player can have the robber stay at the same position or move to a neighboring vertex. If, after either player's turn, a cop and the robber are at the same vertex, the  robber is captured and the cop player wins. The cop number, $c(G)$, of a graph $G$ is the smallest positive integer $k$ such that $k$ cops suffice to capture the robber in finite number of moves. Let $c(n)$ be the maximum of $c(G)$ over all simple connected graphs with $n$ vertices.  In this game, both players are assumed to have complete information about the graphs and the positions of the pebbles.

The cops-robber game was first studied by Quilliot \cite{Quilliot} and independently considered by Nowakowski and Winkler who in \cite{Winkler} classified the {\em cop-win} graphs (i.e. graphs with cop number $1$). The cop number was introduced by Aigner and Fromme \cite{Aigner}. They also showed that if $G$ is planar, then $c(G)\leq 3$. 

Given a connected graph $G$, the distance between vertices $u$ and $v$ in $G$, denoted by $d_G(u,v)$, is the length of a shortest path in G connecting $u$ and $v$. An induced subgraph $H$ of $G$ is {\em isometric} (or {\em geodesic}) if for all vertices $u$ and $v$ of $H$, 
\[d_H (u,v) = d_G (u,v).\]
For example, a shortest path connecting two vertices is isometric. If $H$ is isometric in $G$, then the definition also implies that $H$ must be isometric in any subgraph of $G$ containing $H$ as induced subgraph.\\

For a fixed integer $k\geq 1$, $k$ cops can {\em guard} an induced subgraph $H$ in $G$ if, after finitely many moves, $k$ cops can move within $H$ in such a way that if the robber moves into $H$ at round $t$, then it will be captured at round $t+1$. Note that if one cop can guard a subgraph $H$ in $G$, then $H$ must be isometric in $G$. Suppose not, then there are two vertices $u,v \in V(H)$ such that $d_H(u,v) > d_G(u,v)$. The robber can travel between $u,v$ infinitely many times without being caught by the cop that guards $H$.\\

We say $H$ is {\em 1-guardable} if one cop can guard $H$ in any graph $G$ that contains $H$ as an isometric subgraph. Note that by the definition, if $H$ is 1-guardable, then $c(H) = 1$ (otherwise, the robber can stay in $H$ forever without being caught). 

Note that there exists {\em cop-win} graphs (graphs with cop number $1$) that are isometric in a larger graph $G$ but are not 1-guardable in $G$ (see Figure \ref{fig:notguardable}).

\begin{figure}[hbt]
\begin{center}
\tikzstyle{new}=[circle,fill=red,inner sep=1.5pt]
\tikzstyle{blue}=[circle,fill=blue, inner sep=1.5pt]
\tikzstyle{straight edge}=[]
\begin{tikzpicture}
	\begin{pgfonlayer}{nodelayer}
		\node [style=new] (0) at (-1, 1) {};
		\node [style=new] (1) at (0, 1) {};
		\node [style=new] (2) at (-0.5, 1.75) {};
		\node [style=new] (3) at (-0.5, 0.25) {};
		\node [style=new] (4) at (-1.5, 0.25) {};
		\node [style=new] (5) at (0.5, 0.25) {};
		\node [style=blue] (6) at (-0.5, 2.75) {};
	\end{pgfonlayer}
	\begin{pgfonlayer}{edgelayer}
		\draw [style=straight edge] (2) to (0);
		\draw [style=straight edge] (0) to (4);
		\draw [style=straight edge] (4) to (3);
		\draw [style=straight edge] (3) to (5);
		\draw [style=straight edge] (5) to (1);
		\draw [style=straight edge] (1) to (2);
		\draw [style=straight edge] (0) to (1);
		\draw [style=straight edge] (1) to (3);
		\draw [style=straight edge] (3) to (0);
		\draw [style=straight edge] (6) to (2);
		\draw [style=straight edge, bend right=60] (6) to (4);
		\draw [style=straight edge, bend right=60] (5) to (6);
	\end{pgfonlayer}
\end{tikzpicture}
\end{center}
\label{fig:notguardable}
\caption{The subgraph $H$ induced by the red vertices is isometric in
  the larger graph $G$. $H$ is cop-win, but not 1-guardable in $G$.}
\end{figure}
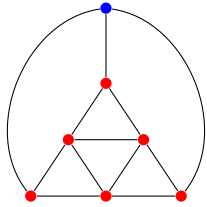

 The following lemma of Aigner and Fromme \cite{Aigner} on guarding isometric paths has found a number of applications. 

\begin{lemma}\label{AF}
\cite{Aigner}  An isometric path is 1-guardable.
\end{lemma}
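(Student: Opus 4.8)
The plan is to have the single cop \emph{shadow} the robber by sitting on the path at the vertex whose index matches the robber's distance from the endpoint $v_0$, capped at the far end. Write the isometric path as $P = v_0 v_1 \cdots v_k$, and for a robber vertex $r$ define its projection index $p(r) = \min\{d_G(v_0, r),\, k\}$. The strategy is to maintain, at the end of each of the cop's moves, the invariant that the cop occupies $v_{p(r)}$ for the robber's current vertex $r$. The reason isometry is essential is that it forces $d_G(v_0, v_j) = j$ for every $v_j \in P$; hence the projection of a vertex that already lies on $P$ is exactly its path-index, which is what will let the cop pounce once the robber steps onto $P$.

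First I would check that the invariant, once achieved, is self-sustaining and yields guarding. Since any single robber move changes $d_G(v_0, r)$ by at most one, it changes $p(r)$ by at most one; as the cop also moves one step along $P$ per round, the cop can always re-match $p(r)$ after the robber moves. For capture, suppose the invariant holds and the robber, standing at some $r \notin P$ adjacent to $v_j$, steps onto $P$ at $v_j$. Because $r \sim v_j$ we get $|d_G(v_0,r) - j| \le 1$, and a short case check (including the capped case $d_G(v_0,r) > k$) gives $|p(r) - j| \le 1$. The cop, sitting at $v_{p(r)}$, is therefore within one step of $v_j$ and captures the robber on its next move --- exactly the ``enter at round $t$, caught at round $t+1$'' requirement.

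The main obstacle is the \emph{initialization}: starting with the cop at $v_0$, I must show it can reach the matched state in finitely many rounds even though the robber may flee. Here the naive ``always step toward the target'' rule does not obviously make progress, since a robber increasing its distance from $v_0$ can keep the gap $p(r) - (\text{cop index})$ constant for a while. The resolution is the boundedness of $P$: let the cop step one vertex toward $v_{p(r)}$ each round without overshooting. While the gap is at least two the cop's index strictly increases every round, yet it can never exceed $k$ because $p(r) \le k$; hence this phase lasts at most $k$ rounds, after which the gap is at most one. An identical argument (with the index decreasing and bounded below by $0$) handles the case where the cop is above the target. Once the gap is at most one the cop steps exactly onto the current target, and the maintenance argument above shows the matched invariant then persists forever. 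I would organize the write-up as: (i) the definition of $p$ and the cop's movement rule; (ii) a bounded-progress lemma giving initialization in at most $O(k)$ rounds; and (iii) maintenance and capture, with the capped projection and the isometry hypothesis flagged as the two places where the bound $k$ and the identity $d_G(v_0,v_j)=j$ are respectively used.
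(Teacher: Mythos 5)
Your proof is correct, and it is the classical projection (``shadow'') argument: map the robber to $v_{p(r)}$ with $p(r)=\min\{d_G(v_0,r),k\}$, note that $p$ changes by at most one per robber move, and use the isometry identity $d_G(v_0,v_j)=j$ so that a robber stepping onto $v_j$ lands within distance one of the cop. Be aware, though, that the paper never proves Lemma~\ref{AF} on its own (it is cited from \cite{Aigner}); internally the lemma is subsumed by Theorem~\ref{t8}, whose proof runs along genuinely different lines: the cop maintains the potential $f(c,r)=\min_{x\in V(H)}\{d(r,x)-d(c,x)\}$, property (P3) is used to show one cop can keep $f$ non-decreasing, and a second cop is borrowed for finitely many rounds to force the robber to keep moving so that $f$ cannot stay negative forever. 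Your route buys two things the paper's does not: it is a genuinely one-cop strategy (your initialization makes progress whether or not the robber moves, since the cop's index is monotone and bounded by $k$ during the phase where the gap is at least two, so no helper cop is needed), and it gives an explicit $O(k)$ bound on the setup time. What it costs is generality: the capped projection exploits the linear order of the path's vertices and does not directly generalize beyond paths, whereas the potential $f$ and the (P3) case analysis are precisely what let the paper's argument extend to vertebrate graphs, which is the point of the paper. Two small points to make explicit when you write this up: the sign of the gap cannot flip while its absolute value is at least two (the gap changes by at most two per round, so it must pass through $\{-1,0,1\}$, at which point the invariant locks in and persists), and the cop needs finitely many preliminary moves to reach $v_0$ in the first place; both are immediate but belong in a complete proof.
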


Using this lemma and a greedy approach, Frankl \cite{Frankl} gave the first sub-linear upper bound on $c(n)$ by showing that $c(n) \leq  (1+o(1)) \lp\frac{n\log \log n}{\log n}\rp.$
This bound was improved later by Chiniforooshan \cite{Chiniforooshan} who showed that $c(n) = O\lp \frac{n}{\log n}\rp$. The best known upper bound is $c(n) = O\lp \frac{n}{2^{(1-o(1)) \sqrt{\log_2 (n)}}}\rp$, which was proven independently by Lu and Peng \cite{Lu},  Scott and Sudakov \cite{Sudakov}, as well as Frieze, Krivelevich and Loh \cite{Frieze}. However, this upper bound is still far away from the bound conjectured by Meyniel:  $c(n)=O(\sqrt{n})$.
The key ingredients of the results above all involve Lemma \ref{AF}. Inspired by the number of application of Aigner and Fromme's lemma, we are interested in finding a larger family of graphs that is 1-guardable. T. Ball, et al. \cite{Ball} extended  Aigner and Fromme's Lemma to isometric trees by showing that an isometric tree is $1$-guardable. They used this result to show that the cop number of generalized Peterson graph $GP(n,3)$ is less than or equal to $3$.

In this paper, we extend the result to larger families of isometric subgraphs
and give metric characterizations to these families
(see Theorem \ref{t6}, \ref{t7}, and \ref{t8}). The proofs of these results
are given in Section 3. An application
of these results is given in the last section.

\section{Preliminaries and results}

Before we state our theorems, we need some preliminary definitions. Let $G$ be a simple graph. From now on, we will always assume
that $G$ is connected.
For any vertex $x$ of $G$, define the neighborhood 
$N_G(x) = \{v \in V(G), vx \in E(G)\}$, and the closed neighborhood
$N_G[x] = N_G(x) \cup \{x\}$.
For any $c,x \in V(G)$ with $d(c,x) \geq 2$, define $N_G(c,x) = \{v \in N_G(c): d(c,x) = 1 + d(v,x)\}$, i.e. $N_G(c,x)$ is the collection of neighbors of $c$ in $G$ that lie on a shortest path between $c$ and $x$. We may omit the subscript G if it is clear from the context.\\

Consider the following three properties on the connected graph $G$:
\begin{description}

\item[Property $(P1)$:]
For any four vertices $c,c',x,y \in V(G)$
the largest two of the three distances $d(c,c') + d(x,y)$, $d(c,x) + d(c',y)$, and $d(c,y) + d(c',x)$ are always equal.

\item[Property $(P2)$:]
For any four vertices $c, x \in V(G)$, $c' \in N(c,x)$, $y \in V(G)$ such that $d(c,x) \geq 2$, $d(c',y)\geq 2$,
either $d(c,y)=d(c,c')+d(c',y)$ or $d(x,y)=d(x,c')+d(c',y)$.

\item[Property $(P3)$:]
There exists an induced subgraph $R$ of $G$ with $N[R] = G$ such that $\forall c \in V(R), x \in V(G)$ with $d(c,x) \geq 2$, there exists $c' \in N_R(c,x)$ such that $\forall y \in V(G)$ satisfying $\;d(c',y) \geq 2$, we have either $d(c,y) = d(c,c') + d(c',y) $ or $d(x,y) = d(x,c') + d(c',y)$.

\end{description}

\noindent{\bf Remark:} Property (P1) is better known as the \textit{four-point condition}. Property (P2) is the special case of Property (P1) with the restriction that $c' \in N(c,x)$ and $d(c',y) \geq 2$. Thus, (P1) implies (P2). Similarly, (P2) implies (P3) with the choice $R=G$.\\

A {\em block graph} (or {\em clique tree}) $G$ is a connected graph in which every {\em block} (i.e. maximal 2-connected subgraph) is complete. E. Howorka \cite{Howorka} showed a purely metric characterization of a block graph: a connected graph is a block graph if and only if it satisfies the four-point condition (i.e. Property (P1)). The {\em joints} of a block graph $G$, denoted as $\cJ(G)$, is the set of cut vertices of $G$.\\

In this paper, we define a larger family of graphs that extend block graphs (see examples in Figure \ref{fig:graphs}).\\

A connected graph $G$ is an {\em extended block graph} if it can be obtained from a block graph by blowing-up the cut vertices, i.e., replacing each cut vertex by a clique and connecting every vertex of the clique to every neighbor of the cut vertex. Each such clique is called a {\em joint block.} Each vertex in a joint block is called a {\em joint} of $G$ and we use $\cJ(G)$ to denote the set of all joints in an extended block graph $G$. Note that each extended block graph has a unique block graph associated with it by contracting each joint block to a single vertex.\\

A simple connected graph $G$ is a {\em vertebrate graph} if there is an induced subgraph $B$ such that 
	\begin{itemize}
		\item $B$ is an extended block graph. 
		\item For all $c \in V(B), x \in V(G)$ with $d(c,x) \geq 2$, there exists $c' \in N_B(c,x)$ such that $N_G[c'] \supseteq N_G[v]$ for all $v \in N_G(c,x)$.
	
	\end{itemize}

We call $B$ the {\em backbone} of G. Note that a block graph is an extended block graph, which, in turn, is a vertebrate graph. 

\begin{figure}[hbt]
\begin{center}
\tikzstyle{vertex}=[circle,fill=black,inner sep=1.5pt]
\tikzstyle{new}=[circle,fill=red,inner sep=1.5pt]
\tikzstyle{blue}=[circle,fill=blue, inner sep=1.5pt]
\tikzstyle{green}=[circle,fill=green, inner sep=1.5pt]

\tikzstyle{curly edge}=[]
\tikzstyle{straight edge}=[]

\begin{tikzpicture}
	\begin{pgfonlayer}{nodelayer}
		\node [style=blue] (0) at (-2, 1) {};
		\node [style=vertex] (1) at (-2, 0) {};
		\node [style=new] (2) at (-1.25, 0.5) {};
		\node [style=vertex] (3) at (-0.5, 1) {};
		\node [style=vertex] (4) at (-2.75, 1) {};
		\node [style=vertex] (5) at (-2.75, 0) {};
		\node [style=vertex] (6) at (-0.5, 0) {};
		\node [style=vertex] (7) at (-0.5, 0) {};
	\end{pgfonlayer}
	\begin{pgfonlayer}{edgelayer}
		\draw [style=curly edge] (0) to (2);
		\draw [style=curly edge] (2) to (3);
		\draw [style=curly edge] (2) to (1);
		\draw [style=curly edge] (1) to (5);
		\draw [style=curly edge] (0) to (4);
		\draw [style=curly edge] (0) to (1);
		\draw [style=straight edge] (6) to (2);
		\draw [style=straight edge] (3) to (6);
	\end{pgfonlayer}
\end{tikzpicture}
\hspace{1cm}
\begin{tikzpicture}
	\begin{pgfonlayer}{nodelayer}
		\node [style=new] (0) at (-2, 1) {};
		\node [style=new] (1) at (-2, 0) {};
		\node [style=vertex] (2) at (-1, 1) {};
		\node [style=vertex] (3) at (-1, 0) {};
		\node [style=blue] (4) at (-2.75, 1.25) {};
		\node [style=blue] (5) at (-3.5, 0.5) {};
		\node [style=vertex] (6) at (-2.75, -0.25) {};
		\node [style=vertex] (7) at (-3.5, 1.25) {};
		\node [style=vertex] (8) at (-3.5, -0.25) {};
	\end{pgfonlayer}
	\begin{pgfonlayer}{edgelayer}
		\draw [style=curly edge] (0) to (2);
		\draw [style=curly edge] (2) to (1);
		\draw [style=curly edge] (0) to (1);
		\draw [style=straight edge] (2) to (3);
		\draw [style=straight edge] (3) to (1);
		\draw [style=straight edge] (0) to (3);
		\draw [style=straight edge] (4) to (0);
		\draw [style=straight edge] (4) to (1);
		\draw [style=straight edge] (6) to (1);
		\draw [style=straight edge] (5) to (6);
		\draw [style=straight edge] (5) to (4);
		\draw [style=straight edge] (5) to (0);
		\draw [style=straight edge] (5) to (1);
		\draw [style=straight edge] (4) to (6);
		\draw [style=straight edge] (6) to (0);
		\draw [style=straight edge] (7) to (4);
		\draw [style=straight edge] (7) to (5);
		\draw [style=straight edge] (8) to (6);
	\end{pgfonlayer}
\end{tikzpicture}\hspace{1cm}
\begin{tikzpicture}
	\begin{pgfonlayer}{nodelayer}
		\node [style=new] (0) at (-2, 1) {};
		\node [style=new] (1) at (-2, 0) {};
		\node [style=vertex] (2) at (-1, 1) {};
		\node [style=vertex] (3) at (-1, 0) {};
		\node [style=blue] (4) at (-2.75, 1.25) {};
		\node [style=blue] (5) at (-3.5, 0.5) {};
		\node [style=vertex] (6) at (-2.75, -0.25) {};
		\node [style=vertex] (7) at (-3.5, 1.25) {};
		\node [style=vertex] (8) at (-3.5, -0.25) {};
		\node [style=green] (9) at (-4, 1.5) {};
		\node [style=green] (10) at (-0.5, 0.5) {};
		\node [style=green] (11) at (-4, 1) {};
	\end{pgfonlayer}
	\begin{pgfonlayer}{edgelayer}
		\draw [style=curly edge] (0) to (2);
		\draw [style=curly edge] (2) to (1);
		\draw [style=curly edge] (0) to (1);
		\draw [style=straight edge] (2) to (3);
		\draw [style=straight edge] (3) to (1);
		\draw [style=straight edge] (0) to (3);
		\draw [style=straight edge] (4) to (0);
		\draw [style=straight edge] (6) to (1);
		\draw [style=straight edge] (5) to (6);
		\draw [style=straight edge] (5) to (4);
		\draw [style=straight edge] (5) to (0);
		\draw [style=straight edge] (5) to (1);
		\draw [style=straight edge] (4) to (6);
		\draw [style=straight edge] (6) to (0);
		\draw [style=straight edge] (7) to (4);
		\draw [style=straight edge] (7) to (5);
		\draw [style=straight edge] (8) to (6);
		\draw [style=straight edge] (9) to (7);
		\draw [style=straight edge] (9) to (5);
		\draw [style=straight edge] (10) to (2);
		\draw [style=straight edge] (10) to (3);
		\draw [style=straight edge] (10) to (1);
		\draw [style=straight edge] (11) to (5);
		\draw [style=straight edge] (11) to (9);
		\draw [style=straight edge] (7) to (11);
		\draw [style=straight edge] (9) to (4);
		\draw [style=straight edge] (11) to (4);
	\end{pgfonlayer}
\end{tikzpicture}
\label{fig:graphs}
\caption{Example of block graph (left), extended block graph (middle) and vertebrate graph (right). The extended block graph is obtained from the block graph by replacing the blue and red cut vertices with a pair of adjacent blue and red vertices respectively. The green vertices in the vertebrate graph are the vertices not in the backbone.}
\end{center}
\end{figure}
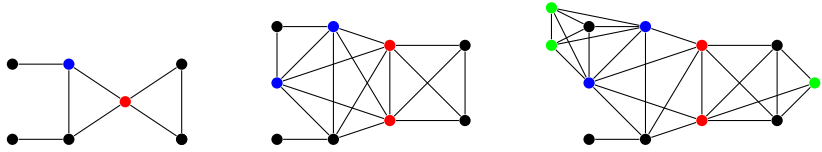

The main results of this paper are as follows:

\begin{theorem}\label{t6}
A simple connected graph $G$ satisfies $(P2)$ if and only if it is an extended block graph.
\end{theorem}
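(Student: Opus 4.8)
The plan is to prove both implications and to organize everything around the true-twin relation. Call two vertices $u,v$ \emph{twins} if $N_G[u]=N_G[v]$; this is an equivalence relation whose classes are cliques. I will use two facts repeatedly. First, every vertex of a joint block is a twin of every other vertex of that block, which is immediate from the blow-up definition since all vertices of a joint block have the same closed neighborhood. Second, contracting any union of twin classes (each to a single vertex) preserves all distances between surviving vertices: a shortest path visits a given twin class at most once, because its vertices are mutually adjacent with identical outside-neighborhoods, and conversely any path in the quotient lifts to a path of the same length in $G$. Thus if $\pi$ denotes such a contraction then $d_{\widehat{G}}(\pi u,\pi v)=d_G(u,v)$ whenever $\pi u\neq \pi v$. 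A key elementary observation I will invoke is that twins are equidistant from every other vertex.

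For the direction ``extended block graph $\Rightarrow (P2)$'' I would let $\widehat{G}$ be the associated block graph, obtained by contracting the joint blocks, and reduce $(P2)$ for $G$ to $(P2)$ for $\widehat{G}$. Given $c,x,c'\in N(c,x),y$ with $d(c,x),d(c',y)\geq 2$, the distance-preservation fact gives $\pi c\neq \pi x$ and $\pi c'\neq \pi y$. Moreover $c$ and $c'$ are not twins, since twins are equidistant from $x$ whereas $d(c',x)=d(c,x)-1$; hence $\pi c\neq \pi c'$, and therefore $\pi c'\in N_{\widehat{G}}(\pi c,\pi x)$. Since $\widehat{G}$ is a block graph it satisfies $(P1)$ by Howorka's theorem, hence $(P2)$ because $(P1)$ implies $(P2)$ as noted above. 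Applying $(P2)$ in $\widehat{G}$ to $\pi c,\pi x,\pi c',\pi y$ and translating the resulting additivity back through $\pi$ then yields one of the two required equalities in $G$. The only delicate case is $\pi x=\pi y$: there the second alternative in $\widehat{G}$ reads $0=d(\pi x,\pi c')+d(\pi c',\pi y)\geq 3$, which is impossible, so the first alternative must hold and pulls back to $d(c,y)=d(c,c')+d(c',y)$.

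For the converse ``$(P2)\Rightarrow$ extended block graph'' I would first show that the full twin quotient $\widehat{G}=G/\!\sim$ is twin-free (a twin pair in $\widehat{G}$ would lift, via the description of closed neighborhoods in terms of adjacent classes, to a twin pair in $G$, contradicting maximality of the classes) and that it inherits $(P2)$: any violation witnessed by vertices of $\widehat{G}$ lifts verbatim to $G$, since $\pi$ preserves adjacency and the relevant distances. The heart of the matter is then the core lemma: \emph{a twin-free graph satisfying $(P2)$ is a block graph}. I would prove its contrapositive using the classical characterization that block graphs are exactly the chordal, diamond-free graphs. If $H$ is twin-free but not a block graph, then it is non-chordal or contains an induced diamond $\{a,b,v_1,v_2\}$ with $a\not\sim b$. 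In the diamond case, twin-freeness furnishes a vertex $w$ adjacent to exactly one of $v_1,v_2$, say $w\sim v_1$; then $c=a,\,x=b,\,c'=v_2\in N(a,b),\,y=w$ violates $(P2)$, because $d(a,w),d(b,w)\leq 2<1+d(v_2,w)$. In the non-chordal case I would take a shortest hole (an induced cycle of length $\geq 4$), which is isometric, fix a length-two geodesic $v_0v_1v_2$ on it, and choose $y$ a hole-vertex at maximum distance from $v_1$; since that distance is the diameter of the hole, both $d(v_0,y)$ and $d(v_2,y)$ are at most $d(v_1,y)$, so neither $d(v_0,y)=1+d(v_1,y)$ nor $d(v_2,y)=1+d(v_1,y)$ holds, again violating $(P2)$. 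Hence $\widehat{G}$ is a block graph, and reattaching each twin class (absorbing the non-cut classes into the base block graph, which keeps it a block graph) exhibits $G$ as a blow-up of cut vertices of a block graph, i.e.\ as an extended block graph.

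I expect the core lemma, and within it the non-chordal case, to be the main obstacle: one must control distances precisely enough to produce a genuine violation of $(P2)$. This is exactly where the isometry of a shortest hole and the choice of a \emph{farthest} witness $y$ are essential; the short holes $C_4,C_5$ are immediate, but for length $\geq 6$ a naive neighbor choice accidentally satisfies $(P2)$, so the farthest-vertex choice cannot be avoided. By comparison, the twin-quotient bookkeeping (distance preservation, inheritance of $(P2)$, twin-freeness of the quotient) and the final reconstruction step (separating cut from non-cut twin classes) are routine.
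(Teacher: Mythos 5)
Your forward direction (extended block graph $\Rightarrow$ (P2)) is correct and is essentially the paper's own argument in quotient language: the paper also reduces to the associated block graph and invokes Howorka's theorem, handling the one degenerate configuration ($x$ and $y$ in a common joint block) exactly the way you handle $\pi x=\pi y$. Your converse, however, takes a genuinely different route from the paper. The paper proves (P2) $\Rightarrow$ extended block graph by induction on $|V(G)|$: it fixes a vertex $u$ realizing the diameter, takes a connected component $C$ of the farthest distance level from $u$, shows via (P2) that $C\cup N(C)$ is a clique, and deletes $C$. You instead pass to the true-twin quotient and reduce everything to the lemma that a twin-free graph satisfying (P2) is a block graph, via the characterization of block graphs as chordal diamond-free graphs. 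Your twin-quotient bookkeeping is sound, and your diamond case is correct as stated.

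The gap is the clause ``I would take a shortest hole \dots which is isometric.'' That is not a fact about general graphs: in the wheel obtained from $C_6=v_1v_2\cdots v_6$ by adding a hub $u$ adjacent to all $v_i$, the unique hole is the $C_6$ itself (every longer cycle through $u$ has a chord at $u$), yet it is not isometric, since $d(v_1,v_4)=2$ through the hub. So your non-chordal case does not go through as written. The claim does become true once diamonds are excluded --- which you are entitled to assume, since your diamond case already produces a (P2) violation, so the case split should be stated as ``contains a diamond'' versus ``diamond-free and non-chordal'' --- but even then it requires a proof, and that proof is the real content of this case. One way to supply it: let $C$ be a minimum-length hole in a diamond-free $H$, suppose $x,y\in C$ satisfy $d_H(x,y)<d_C(x,y)$, and choose such a pair with $d_H(x,y)$ minimum; minimality forces a shortest $x$--$y$ path $P$ to be internally disjoint from $C$, so $P$ together with the longer $x$--$y$ arc of $C$ is a cycle $D$ on fewer than $|C|$ vertices. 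By minimality of $|C|$, every induced cycle of $H[V(D)]$ is a triangle, so $H[V(D)]$ is chordal and (being an induced subgraph) diamond-free, hence a block graph; but it has a spanning cycle, so it is $2$-connected, hence a single block, hence a clique --- contradicting $d_H(x,y)\geq 2$. With that lemma inserted, your farthest-witness argument on the hole is correct and your proof of Theorem~\ref{t6} is complete.
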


\begin{theorem}\label{t7}
A simple connected graph $G$ satisfies $(P3)$ if and only if it is a vertebrate graph. 
\end{theorem}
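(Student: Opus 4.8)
The plan is to prove both implications by taking the distinguished dominating subgraph $R$ of (P3) and the backbone $B$ of a vertebrate graph to be the \emph{same} induced subgraph, and to translate between the two defining conditions — a metric betweenness condition and a neighborhood-containment condition — using Theorem \ref{t6} as a black box applied to the backbone. Two structural facts will be harvested for free in both directions from the mere nonemptiness of $N_B(c,x)$ (resp.\ $N_R(c,x)$): since each defining condition asserts the \emph{existence} of some $c'\in N_B(c,x)$ whenever $d(c,x)\ge 2$, that set is nonempty in this range, so iterating ``step to a backbone neighbor one closer to the target'' builds a shortest path lying entirely inside the backbone. This shows at once that the backbone is isometric in $G$, and (applying it to a closest backbone vertex of an $x\notin B$) that it dominates $G$.

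For the direction ``vertebrate $\Rightarrow$ (P3)'', I set $R=B$. Domination and isometry are supplied by the remark above, so $R$ meets the $N[R]=G$ requirement, and by Theorem \ref{t6} the backbone satisfies (P2) internally, which by isometry is an honest statement about $d_G$ restricted to $B$. It then remains to upgrade the neighborhood-containment condition into the metric condition of (P3): given $c\in B$, $x$ with $d(c,x)\ge 2$, and the guaranteed $c'\in N_B(c,x)$ with $N_G[c']\supseteq N_G[v]$ for all $v\in N_G(c,x)$, I must show that every $y$ with $d(c',y)\ge 2$ has $c'$ on a shortest $c$--$y$ path or on a shortest $x$--$y$ path. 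Assuming neither yields $d(c,y)\le d(c',y)$ and $d(x,y)\le d(x,c')+d(c',y)-1$; I would then track the first step $w$ of a shortest $c'$--$y$ path and locate $w$ relative to the joint block containing $c'$, using that $c'$ dominates the forward neighbors of $c$ together with the clique-tree geometry of $B$ given by (P2).

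For the converse ``(P3) $\Rightarrow$ vertebrate'', I set $B=R$, with isometry again free from the harvested fact. Here the neighborhood-containment condition is \emph{immediate}: for $v\in N_G(c,x)$ and any $u\in N_G[v]$ one has $d(c,u)\le 2$ and $d(x,u)\le d(c,x)$, so if $d(c',u)\ge 2$ then the first betweenness alternative forces $d(c,u)=1+d(c',u)\ge 3$ and the second forces $d(x,u)=d(x,c')+d(c',u)\ge d(c,x)+1$, both impossible; applying (P3) to $y=u$ therefore gives $d(c',u)\le 1$, i.e.\ $N_G[c']\supseteq N_G[v]$. The remaining task is to show that $R$ is an extended block graph, which by Theorem \ref{t6} reduces to verifying that $R$ satisfies (P2).

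I expect this last verification to be the main obstacle. Condition (P3) hands over only \emph{one} good forward neighbor $c'$ per pair $(c,x)$, whereas (P2) is a \emph{universal} statement over all forward neighbors; so the crux is to promote the existential condition to the universal one. The intended mechanism is that the good neighbor dominates every forward neighbor (the computation just given), and in the isometric backbone this should force all forward neighbors to share the same closed neighborhood, i.e.\ to constitute a single joint block, after which any forward neighbor is as good as $c'$. Making this ``domination forces a joint block'' step precise — and, in the vertebrate-to-(P3) direction, controlling how $G$-geodesics enter and leave the backbone when $x$ and $y$ lie outside $B$ — is where the genuine work of the theorem lies.
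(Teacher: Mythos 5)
Your proposal leaves both cores of the theorem unproved, and in one direction the announced route cannot be completed at all. For ``(P3) $\Rightarrow$ vertebrate'' you set $B=R$ and reduce to showing that $R$ satisfies (P2); this is false in general, because the $R$ witnessing (P3) may contain vertices that are never needed as good forward neighbors. Take $G$ with vertex set $\{c,a,b,x,w\}$ and edges $ca$, $cb$, $ab$, $ax$, $bx$, $bw$ (a $K_4$ minus the edge $cx$, with a pendant $w$ attached to $b$). Then (P3) holds with $R=G$: the only ordered pairs at distance $\geq 2$ are $(c,x)$, $(c,w)$, $(x,w)$, $(a,w)$ and their reverses, and in every case $b$ is an eligible forward neighbor for which the condition on $y$ is vacuous, since $N[b]=V(G)$ leaves no $y$ with $d(b,y)\geq 2$. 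Yet this $R$ violates (P2): for the pair $(c,x)$, the forward neighbor $a\in N(c,x)$, and $y=w$, we have $d(a,w)=2$ but $d(c,w)=2\neq 1+2$ and $d(x,w)=2\neq d(x,a)+d(a,w)=3$; so by Theorem \ref{t6}, $R$ is not an extended block graph. The same example refutes your ``intended mechanism'': the good neighbor $b$ dominates the other forward neighbor $a$, but this yields only $N[a]\subsetneq N[b]$, not equality, so the forward neighbors of $(c,x)$ do not form a joint block. The paper avoids this by \emph{shrinking} $R$: it defines $C(c,x)=\{u\in N_R(c,x): N[u]\supseteq N[v]\ \forall v\in N(c,x)\}$ and takes the backbone to be $B=G[S]$ with $S=\bigcup C(c,x)$; inside $S$ the containments do become equalities (Corollaries \ref{c2} and \ref{c4}), and that equality is precisely what allows the existential (P3) witness to be promoted to every forward neighbor in $N_B(c,x)$. (In the example, $S=\{b\}$.)

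In the converse direction your plan is not provably wrong, but the gap is the whole argument: ``I would then track the first step $w$ of a shortest $c'$--$y$ path and locate $w$ relative to the joint block containing $c'$'' is a statement of intent, not a proof. The paper's proof of this half (i) again replaces the given backbone by the subset of dominating forward neighbors, a property it invokes explicitly later; (ii) passes to $G'=G[V(B)\cup\{x,y\}]$, which requires knowing that geodesics between two vertices \emph{outside} $B$ can be routed through $B$ (Lemma \ref{l3}) --- your ``harvested fact'' only covers geodesics with at least one endpoint in the backbone; and (iii) proves a genuine separation claim, that $J=J_{c'}\cup N_{G'}(c,x)$ separates $c$ from $x$ in $G'$, via a two-case analysis, after which any $y$ with $d(c',y)\geq 2$ lies across $J$ from $c$ or from $x$, and the relevant geodesic can be rerouted through $c'$ because $c'$ dominates all of $J$. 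None of (i)--(iii) appears in your proposal, and you acknowledge as much. What you do have correctly --- isometry and domination of the backbone, and the derivation of the neighborhood-containment condition from (P3), which matches the paper's Lemma \ref{l1} --- are the easy parts; the two holes you leave open are the theorem, and one of them requires a different construction of the backbone before it can be closed.
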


\begin{theorem}\label{t8}
Any vertebrate graph is 1-guardable.
\end{theorem}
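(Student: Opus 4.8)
The plan is to produce an explicit one-cop strategy for guarding a vertebrate graph $H$ inside an arbitrary host $\hat G$ in which $H$ sits isometrically, driving the whole argument through the backbone $B$ together with the domination property in the definition of a vertebrate graph (equivalently, Property $(P3)$ via Theorem \ref{t7}). Isometry lets me identify $d_{\hat G}(u,v)$ with $d_H(u,v)$ for all $u,v\in V(H)$, so distances between vertices of $H$ are intrinsic. Since $N_H[B]=H$ (the content of $N[R]=G$ in Property $(P3)$), every vertex of $H$ lies in $B$ or is $H$-adjacent to $B$, which is what permits the cop to confine itself to $B$. Writing $m(r)=\min_{h\in V(H)}d_{\hat G}(r,h)$ for the robber's distance to $H$, it suffices to prevent the robber from entering $H$: the only way to enter is, from a position $r$ with $m(r)=1$, to step onto an \emph{entry point}, i.e. a vertex of $N_{\hat G}(r)\cap V(H)$.

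The cop will maintain, restored at the end of each of its turns, the invariant that it occupies a \emph{shadow} of the robber's current position $r$: a vertex $c\in V(B)$ with $N_H[c]\supseteq N_{\hat G}(r)\cap V(H)$ (so $c$ dominates every entry point) together with a centrality condition inherited from $(P3)$, namely that $c$ lies on shortest paths from $r$ into $H$, which is what will make $c$ trackable. This dominating shadow simultaneously generalizes the Aigner--Fromme projection on a path and an arbitrary vertex of a clique. I emphasize that I do \emph{not} track a gate: already in $K_3$ a robber adjacent to two of the three clique vertices has no vertex lying on shortest paths to all of $H$, so a gate need not exist; the vertebrate structure instead supplies a backbone vertex whose \emph{closed neighbourhood} covers all entry points, and such a vertex is realizable precisely because the entry points that must be covered sit together inside a single joint block (a clique) of $B$, so one clique-step of the cop re-covers them.

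Granting the invariant, \emph{capture} is immediate: if the robber is outside $H$ and at round $t$ moves onto $h\in V(H)$, then before the move $h$ was an entry point, so by the invariant $h\in N_H[c]$, i.e. $d(c,h)\le 1$, and the cop steps onto $h$ at round $t+1$. The heart of the argument is the \emph{tracking lemma}: if $c$ is a shadow of $r$ and the robber moves to an adjacent $r'$, the cop can move to some $c'\in N_B[c]$ that is a shadow of $r'$. I would prove this by cases on how $m$ changes. If $m(r')\ge m(r)$ (the robber stays or retreats), there are no new entry points beyond reach and $c$, or a one-step adjustment, still works. If $m(r')=m(r)-1$ (the robber advances), I apply the domination condition with $c$ in the role of the backbone vertex and a suitable closest vertex $x\in V(H)$ to $r'$ in the role of $x$: this produces $c'\in N_B(c,x)$, one step closer to $x$, with $N_H[c']\supseteq N_H[v]$ for all $v\in N_H(c,x)$, hence covering the new entry points; the betweenness alternative of $(P3)$, that $d(c,y)=d(c,c')+d(c',y)$ or $d(x,y)=d(x,c')+d(c',y)$ for every $y$, is exactly what certifies that $c'$ again lies correctly on shortest paths and remains a shadow of $r'$.

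The main obstacle is this tracking lemma, and within it the case where the robber moves \emph{laterally} at fixed distance $m(r)=1$, sweeping across different entry points of $H$: the cop must re-cover a changing set of entry vertices in a single step, and this is where the full force of the domination condition (a \emph{single} $c'\in N_B(c,x)$ dominating \emph{all} forward neighbours at once), together with the clique structure of joint blocks, is indispensable. Finally, for \emph{initialization} I would invoke the standard catch-up argument used in the path and tree cases: the tracking lemma shows the shadow moves by at most one backbone-edge per robber step, so the cop, starting anywhere in $B$ and repeatedly stepping along a shortest $B$-path toward the current shadow, reaches it in finitely many moves, using that the block graph underlying $B$ is tree-like and finite so the shadow cannot evade the cop indefinitely. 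Once the cop stands on a shadow, the tracking lemma maintains the invariant forever, and capture occurs the instant the robber attempts to enter $H$.
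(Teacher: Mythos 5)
Your proposal takes a genuinely different route from the paper (a local ``shadow'' invariant versus the paper's global potential function), but it contains a fatal gap: the invariant you maintain is too weak, and your tracking lemma is false as stated. The paper's cop controls the \emph{global} quantity $f(c,r)=\min_{x\in V(H)}\{d(r,x)-d(c,x)\}$, i.e.\ it keeps the cop at least as close as the robber to \emph{every} vertex of $H$ (up to a deficit $k$ that it never lets grow); your invariant only asks that the cop dominate the current entry points (a condition that is vacuous whenever the robber is at distance at least $2$ from $H$) and lie on shortest paths from $r$ into $H$. Here is a concrete counterexample, already for $H$ an isometric path (the simplest vertebrate graph, equal to its own backbone): let $H=v_0v_1\cdots v_{10}$, and let the host graph $G$ consist of $H$, a pendant path of length $5$ from $v_0$ to a vertex $p$, a pendant path of length $5$ from $v_{10}$ to a vertex $p'$, and the edge $pp'$. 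One checks $H$ is isometric in $G$ (any detour through $pp'$ has length at least $11$). With the robber at $p$, the cop at $v_0$ satisfies your invariant: there are no entry points, and $v_0$ lies on the unique shortest path from $p$ into $H$. But when the robber steps from $p$ to $p'$, the vertices of $H$ lying on shortest paths from $p'$ into $H$ collapse to $v_{10}$, which is ten backbone edges away: no one-step cop move restores your invariant, and the robber now walks down the other pendant path and enters $H$ at $v_{10}$ long before the cop can arrive. The same example kills your initialization argument: a shadow defined through nearest points or entering geodesics is \emph{not} $1$-Lipschitz in the robber's moves (here it jumps from $v_0$ to $v_{10}$ in a single robber step), so the catch-up chase need not terminate. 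The correct invariant, $d(c,x)\le d(r,x)$ for all $x\in V(H)$, would have placed the cop at $v_5$ rather than $v_0$; this quantification over all of $H$, not just over entry points and nearest vertices, is exactly what your proposal is missing.

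A second, structural omission: even with the right potential, the paper does not initialize with a one-cop chase, and it is not known from its argument that one can. The paper's proof has two claims: first, that the cop can keep $f(r,c)$ non-decreasing, using $(P3)$ roughly as you envisage for your tracking lemma but applied to the vertex $x$ minimizing the global potential; second, that $f$ cannot remain negative forever, which is proved with the help of a \emph{temporary second cop} whose only role is to force the robber to keep moving, followed by a genuinely nontrivial argument: if $f$ stayed at $-k<0$ forever, three consecutive cop positions $c_{t-1},c_t,c_{t+1}$ would eventually lie in a common maximal clique of the backbone, and $(P3)$ together with the triangle inequality then yields $2k\le 0$, a contradiction. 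Your appeal to ``the block graph underlying $B$ is tree-like and finite so the shadow cannot evade the cop indefinitely'' has no force once the shadow can jump, as above. To repair the proposal you would need to (i) replace your invariant by the global one, and (ii) either reproduce the paper's clique argument (including the finitely-used helper cop) or supply a new argument that a single cop can always drive the potential to a non-negative value --- something the paper itself does not claim.
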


The inclusive relations of these families of graphs are described in Figure \ref{fig:inclusions}.

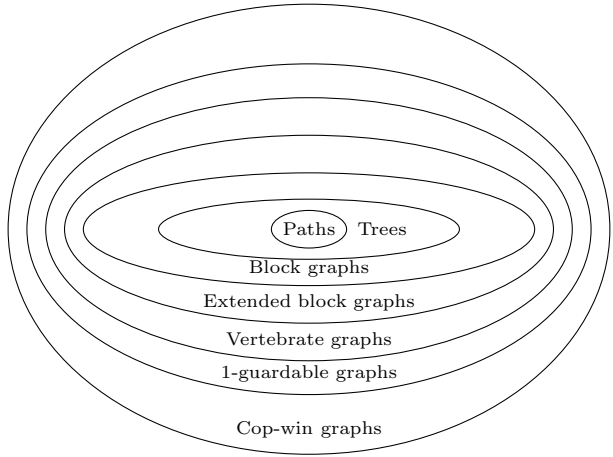
\begin{figure}[hbt]
\begin{center}
\begin{tikzpicture}
\node[ellipse, draw, minimum height=3cm,minimum width=8cm, text height = 4cm, label={[font=\scriptsize, anchor=southeast,above=1mm]270: Cop-win graphs}] (main) {};
\node[ellipse, draw, minimum height = 4.4cm, minimum width = 7.5cm,label={[font=\scriptsize, anchor=south,above=0.5mm]270:1-guardable graphs}] at (main.center)  (semi) {};
\node[ellipse, draw, minimum height = 3.5cm, minimum width = 7cm,label={[font=\scriptsize, anchor=south,above=0.5mm]270: Vertebrate graphs}] at (main.center) (active) {};
\node[ellipse, draw, minimum height = 2.5cm, minimum width = 6.5cm,label={[font=\scriptsize, anchor=south,above=0.5mm]270: Extended block graphs}] at (main.center) (active) {};
\node[ellipse, draw, minimum height = 1.5cm, minimum width = 6cm,label={[font=\scriptsize, anchor=south,above=0mm]270: Block graphs}] at (main.center) (active) {};
\node[ellipse, draw, minimum height = 0.8cm, minimum width = 4cm,label={[font=\scriptsize, anchor=east,above=4mm, right=.5mm]320: Trees}] at (main.center) (active) {};
\node[ellipse, draw, minimum height = 0.5cm, minimum width = 1cm,label={[font=\scriptsize, anchor=west,above=.5mm]270: Paths}] at (main.center) (active) {};
\end{tikzpicture}
\end{center}
\caption{Families of $1$-guardable graphs and cop-win graphs.}
\label{fig:inclusions}
\end{figure}

In Section 3, we will prove the main theorems. In Section 4, we use these results to give the cop number of some special class of multi-layer generalized Peterson graphs.


\section{Proof of theorems}

\subsection{Technical Lemmas}
 For ease of notation, for the rest of the paper, we use $N[u]$ to denote $N_G[u]$ and $N(c,x)$ to denote $N_G(c,x)$. Before we prove the main theorems, let us first establish some lemmas.
 
Let $G$ be a graph that satisfies $(P3)$ and $R$ be the subgraph of $G$ defined in $(P3)$ . Let $c \in V(R)$ and $x \in V(G)$. Define 
\[C(c,x) = \{u \in N_R(c,x) : \forall v \in N(c,x), N[u] \supseteq N[v]\}. \]

\begin{lemma}\label{l1}
For any $c \in V(R)$, $x \in V(G)$, and $c' \in N_R(c,x)$, if for any $y \in V(G)$ with $d(c',y) \geq 2$, either $d(c,y) = d(c',y) + 1$ or $d(x,y) = d(x,c') + d(c',y)$, then $c' \in C(c,x)$.
\end{lemma}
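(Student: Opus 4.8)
The plan is to unwind the definition of $C(c,x)$ and verify its two requirements for $c'$. The membership $c' \in N_R(c,x)$ is already part of the hypothesis, so the entire content of the lemma is the closed-neighborhood domination condition: for every $v \in N(c,x)$ one must establish $N[v] \subseteq N[c']$. I would reduce this to a pointwise statement, fixing an arbitrary $v \in N(c,x)$ and an arbitrary $w \in N[v]$ and proving $w \in N[c']$.

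I would prove the pointwise statement by contradiction. Suppose $d(c',w) \geq 2$; then $w$ is a legitimate choice of $y$ in the hypothesis, so one of the two alternatives must hold. Before splitting into cases I would record the distance bookkeeping. Writing $k = d(c,x) \geq 2$, both $v$ and $c'$ lie in $N(c,x)$, so $d(c,v) = d(c,c') = 1$ and $d(v,x) = d(c',x) = k-1$. Two triangle inequalities then give the key bounds $d(c,w) \leq d(c,v) + d(v,w) \leq 2$ and $d(x,w) \leq d(x,v) + d(v,w) \leq k$, where both use $d(v,w) \leq 1$ since $w \in N[v]$.

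Now each alternative in the hypothesis collides with one of these bounds. If $d(c,w) = d(c',w) + 1$, then $d(c,w) \geq 3$, contradicting $d(c,w) \leq 2$. If instead $d(x,w) = d(x,c') + d(c',w) = (k-1) + d(c',w)$, then $d(x,w) \geq k+1$, contradicting $d(x,w) \leq k$. Either way we reach a contradiction, so the assumption $d(c',w) \geq 2$ fails and $w \in N[c']$. Since $w \in N[v]$ and $v \in N(c,x)$ were arbitrary, this yields $N[v] \subseteq N[c']$ for every such $v$, i.e.\ $c' \in C(c,x)$.

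I do not expect a serious obstacle here: the argument is essentially a two-line contradiction once the distance bookkeeping is in place. The one point requiring care is to notice that both $c'$ and $v$ sit on shortest $c$--$x$ paths and hence share the same distances to $c$ and to $x$, which is exactly what makes the two triangle-inequality bounds tight enough to contradict the two alternatives. The degenerate case $w = v$ (where $d(v,w) = 0$) is subsumed by the same inequalities, so no separate treatment is needed.
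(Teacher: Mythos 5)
Your proof is correct and follows essentially the same route as the paper's: the paper also fixes $v \in N(c,x)$, applies the hypothesis to a putative $y \in N[v]\setminus N[c']$, and derives a contradiction, though it leaves the triangle-inequality bookkeeping implicit where you spell it out. No gaps; your write-up is just a more detailed version of the same argument.
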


\begin{proof}
Let $v \in N(c,x)$. Then $v \in N(c')$ otherwise we get a contradiction to the (P3) condition (by setting $y=v$). Similarly, if there exists $y \in N[v]\backslash N[c']$, then $c,x,c',y$ also violates the (P3) condition. Hence $N[v] \subseteq N[c']$. It follows that $c' \in C(c,x)$.
\end{proof}

\begin{corollary}\label{c1}
Let $G$ be a graph that satisfies $(P3)$ and $R$ be the subgraph of $G$ defined in $(P3)$. Then for any $c \in V(R)$ and $x \in V(G)$ with $d(c,x) \geq 2$, we have $C(c,x) \neq \emptyset$.
\end{corollary}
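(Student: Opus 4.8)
The plan is to derive the corollary directly from property (P3) together with Lemma \ref{l1}; essentially all the work has already been done in the lemma, and the corollary is the statement that (P3) always produces a vertex meeting that lemma's hypothesis. I would start by fixing $c \in V(R)$ and $x \in V(G)$ with $d(c,x) \geq 2$, and then invoke property (P3) for this pair: since $G$ satisfies (P3) with witnessing subgraph $R$, there is a vertex $c' \in N_R(c,x)$ such that for every $y \in V(G)$ with $d(c',y) \geq 2$ we have either $d(c,y) = d(c,c') + d(c',y)$ or $d(x,y) = d(x,c') + d(c',y)$.

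The one observation that connects this to Lemma \ref{l1} is that $c' \in N_R(c,x) \subseteq N(c)$, so $d(c,c') = 1$. Consequently the first alternative $d(c,y) = d(c,c') + d(c',y)$ is literally $d(c,y) = d(c',y) + 1$, which is exactly the form of the hypothesis stated in Lemma \ref{l1}. Thus the $c'$ handed to us by (P3) satisfies, verbatim, the assumption of that lemma.

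Applying Lemma \ref{l1} to this $c'$ then yields $c' \in C(c,x)$, and hence $C(c,x) \neq \emptyset$, which is the claim. I would keep the write-up to essentially these two steps.

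I do not expect a genuine obstacle here: the corollary is an immediate reformulation of (P3) once one notes that $d(c,c')=1$ identifies the two distance equations in (P3) with those appearing in Lemma \ref{l1}. The only point that warrants an explicit sentence is that the witness $c'$ guaranteed by (P3) lies in $N_R(c,x)$ rather than merely in $N(c,x)$ --- this membership in the restricted neighborhood is precisely what (P3) promises and precisely what the definition of $C(c,x)$ demands, so the two line up without further argument.
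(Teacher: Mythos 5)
Your proof is correct and matches the paper's intent exactly: the corollary is stated without an explicit proof precisely because it follows immediately from (P3) producing a witness $c' \in N_R(c,x)$ and Lemma \ref{l1} certifying $c' \in C(c,x)$, with the only observation needed being $d(c,c')=1$, which you make explicitly.
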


\begin{corollary}\label{c2}
Let $G$ be a graph that satisfies $(P3)$ and $R$ be the subgraph of $G$ defined in $(P3)$.
Then for any $c\in V(R)$, $x\in V(G)$ with $d(c,x) \geq 2$, and any $c_1,c_2 \in C(c,x)$, we have $N[c_1] = N[c_2]$. 
\end{corollary}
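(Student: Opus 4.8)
The plan is to observe that the defining condition of $C(c,x)$ can be applied to its own members, because every element of $C(c,x)$ already lies in $N(c,x)$. Concretely, by definition we have $C(c,x) \subseteq N_R(c,x)$, and since $N_R(c,x)$ consists of the neighbors of $c$ within $R$ that lie on a shortest $c$--$x$ path (with all distances measured by the ambient $d = d_G$), and $N_R(c) \subseteq N_G(c)$, it follows that $N_R(c,x) \subseteq N(c,x)$. Hence both $c_1$ and $c_2$ belong to $N(c,x)$.

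Next I would simply unpack the two memberships. From $c_1 \in C(c,x)$ we know that $N[c_1] \supseteq N[v]$ for every $v \in N(c,x)$. Taking $v = c_2$ — which is legitimate since $c_2 \in N(c,x)$ by the previous step — yields $N[c_1] \supseteq N[c_2]$. By the symmetric argument, $c_2 \in C(c,x)$ together with $c_1 \in N(c,x)$ gives $N[c_2] \supseteq N[c_1]$. Combining the two inclusions produces $N[c_1] = N[c_2]$, which is exactly the claim.

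I do not expect a genuine obstacle here: the corollary is essentially an immediate consequence of the definition of $C(c,x)$, once one notices that the elements of $C(c,x)$ are themselves eligible to play the role of the dominated vertex $v$ appearing in that definition. The only point that warrants a moment's care is the inclusion $N_R(c,x) \subseteq N(c,x)$, i.e. that a neighbor of $c$ inside $R$ lying on a shortest path of $G$ is in particular a vertex of $N(c,x)$; but this is immediate from $N_R(c) \subseteq N_G(c)$ and the fact that both $N_R(c,x)$ and $N(c,x)$ use the same $G$-distance in their defining equation $d(c,x) = 1 + d(\,\cdot\,,x)$.
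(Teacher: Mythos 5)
Your proof is correct and is essentially the paper's own argument: both unpack the definition of $C(c,x)$, note that its members lie in $N(c,x)$, and apply the domination condition in both directions to get $N[c_1]\supseteq N[c_2]$ and $N[c_2]\supseteq N[c_1]$. You simply make explicit the inclusion $C(c,x)\subseteq N_R(c,x)\subseteq N(c,x)$ that the paper leaves implicit in its one-line proof.
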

\begin{proof}
By definition, since $c_1, c_2\in C(c,x)$, it follows that $N[c_1] \subseteq N[c_2]$ and $N[c_2] \subseteq N[c_1]$.

\end{proof}

Observe that if a graph $G$ satisfies $(P2)$, it also satisfies $(P3)$ with $R = G$. Hence we have the following corollaries.
\begin{corollary}\label{c3}
Suppose $G$ satisfies $(P2)$. Let $c,x \in V(G)$ with $d(c,x) \geq 2$. For all $c_1, c_2 \in N(c,x)$, we have
$N[c_1] = N[c_2]$.
\end{corollary}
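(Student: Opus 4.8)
The plan is to reduce everything to the machinery already built for $(P3)$, using the observation (already noted in the excerpt) that a graph satisfying $(P2)$ satisfies $(P3)$ with the choice $R = G$. Under this choice, $N_R(c,x) = N(c,x)$, so the set $C(c,x)$ becomes a subset of $N(c,x)$, and Corollary \ref{c2} tells us that any two elements of $C(c,x)$ have equal closed neighborhoods. Thus the whole statement will follow once I show that, under $(P2)$, \emph{every} vertex of $N(c,x)$ in fact lies in $C(c,x)$, i.e. $C(c,x) = N(c,x)$.

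To establish $C(c,x) = N(c,x)$, I would apply Lemma \ref{l1} to an arbitrary $c' \in N(c,x)$ and verify its hypothesis directly from $(P2)$. The key point is that $c' \in N(c,x) \subseteq N(c)$, so $d(c,c') = 1$; hence the expression $d(c,c') + d(c',y)$ appearing in $(P2)$ equals $1 + d(c',y)$. Now fix any $y \in V(G)$ with $d(c',y) \geq 2$. Applying Property $(P2)$ to the quadruple $c, x, c', y$ (valid since $d(c,x) \geq 2$ and $d(c',y) \geq 2$) yields that either $d(c,y) = d(c,c') + d(c',y) = d(c',y) + 1$ or $d(x,y) = d(x,c') + d(c',y)$. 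This is precisely the dichotomy required in the hypothesis of Lemma \ref{l1}, so that lemma gives $c' \in C(c,x)$.

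Since $c' \in N(c,x)$ was arbitrary, this shows $N(c,x) \subseteq C(c,x)$, and the reverse inclusion $C(c,x) \subseteq N_R(c,x) = N(c,x)$ is immediate from the definition of $C(c,x)$ with $R = G$. Therefore $C(c,x) = N(c,x)$. Finally, for any $c_1, c_2 \in N(c,x) = C(c,x)$, Corollary \ref{c2} (applicable because $G$ satisfies $(P3)$ with $R = G$) gives $N[c_1] = N[c_2]$, which is the desired conclusion.

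I do not anticipate a genuine obstacle here; the corollary is essentially a specialization of the $(P3)$-theory to the case $R = G$. The only step that deserves care is the translation of the $(P2)$ condition into the hypothesis of Lemma \ref{l1}: one must explicitly use that $d(c,c') = 1$ to rewrite $d(c,c') + d(c',y)$ as $d(c',y) + 1$, matching the form ``$d(c,y) = d(c',y) + 1$'' demanded by Lemma \ref{l1}. Everything else is bookkeeping about the substitution $R = G$.
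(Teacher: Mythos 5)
Your proposal is correct and takes essentially the same route as the paper: the paper's (two-sentence) proof likewise observes that $(P2)$ supplies exactly the hypothesis of Lemma \ref{l1} (with $R=G$), so that every vertex of $N(c,x)$ lies in $C(c,x)$, and then concludes via Corollary \ref{c2}. Your write-up simply makes explicit the details the paper leaves implicit, notably the use of $d(c,c')=1$ to match the dichotomy in $(P2)$ to the form required by Lemma \ref{l1}.
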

\begin{proof}
For any $c_1, c_2 \in N(c,x)$, we have $c_1, c_2 \in C(c,x)$ by $(P2)$ and Lemma \ref{l1}. Hence by Corollary \ref{c2}, $N[c_1] = N[c_2]$.\\
\end{proof}

\begin{corollary}\label{c4}
Suppose $G$ satisfies $(P3)$. For any two vertices $c_1, c_2 \in V(G)$, if $N[c_1] \subseteq N[c_2]$, and $c_1 \in C(c,x)$ for some $c \in V(R), x \in V(G)$ with $d(c,x)\geq 2$, then $c_2 \in C(c,x)$. 
\end{corollary}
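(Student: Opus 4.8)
The plan is to verify directly the two conditions defining membership in $C(c,x)$ for the vertex $c_2$: namely that $c_2 \in N_R(c,x)$, and that $N[c_2] \supseteq N[v]$ for every $v \in N(c,x)$. The second condition comes for free. Since $c_1 \in C(c,x)$, we already have $N[v] \subseteq N[c_1]$ for every $v \in N(c,x)$, and the hypothesis $N[c_1] \subseteq N[c_2]$ then gives $N[v] \subseteq N[c_2]$ by transitivity of inclusion. So no work is needed there, and everything reduces to showing $c_2 \in N_R(c,x)$.

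The substance is therefore to show that $c_2$ is a neighbor of $c$ lying one step closer to $x$. First I would record that $c \in N(c_1) \subseteq N[c_1] \subseteq N[c_2]$, so $c_2$ is either equal to $c$ or adjacent to $c$. To pin down the distance, I would introduce a neighbor $w$ of $c_1$ on a shortest $c_1$–$x$ path, so that $d(w,x) = d(c_1,x) - 1 = d(c,x) - 2$ (using $c_1 \in N(c,x)$); note $w \in N(c_1) \subseteq N[c_2]$. If $c_2 = c$, then $w \in N[c]$ and, since $d(w,x) = d(c,x)-2$ forces $w \ne c$, we get $w \in N(c)$ and hence $d(c,x) \le 1 + d(w,x) = d(c,x)-1$, a contradiction; so $c_2 \ne c$ and thus $c_2 \in N(c)$, giving $d(c_2,x) \ge d(c,x) - 1$. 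Now the case $w = c_2$ would yield $d(c_2,x) = d(c,x)-2$, contradicting this lower bound, so $w \in N(c_2)$ and $d(c_2,x) \le 1 + d(w,x) = d(c,x)-1$. Combining the two inequalities gives $d(c_2,x) = d(c,x)-1$, i.e. $c_2 \in N(c,x)$.

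The one remaining point, which I expect to be the main obstacle, is confirming that $c_2$ genuinely lies in $R$, so that $c_2 \in N_R(c,x)$ and not merely $c_2 \in N(c,x)$. This does not follow from the distance computation above and must be obtained from the structural role of the backbone $R$ fixed in $(P3)$ (or read off from the intended hypotheses on which vertices $c_2$ ranges over). Once $c_2 \in V(R)$ is in hand, the two verified conditions place $c_2 \in C(c,x)$ and finish the argument; this is precisely the packaging supplied by Lemma \ref{l1}, which turns membership in $N_R(c,x)$ together with the domination property into membership in $C(c,x)$.
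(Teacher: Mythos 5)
Your ``free'' step is, in fact, the paper's entire proof: the paper proves Corollary~\ref{c4} in one line, observing that for every $v \in N(c,x)$ one has $N[v] \subseteq N[c_1] \subseteq N[c_2]$, and concludes $c_2 \in C(c,x)$ from that alone. Everything else in your proposal is extra care that the paper omits. Your argument that $c_2 \in N(c,x)$ (ruling out $c_2 = c$, then ruling out $w = c_2$, and squeezing $d(c_2,x) = d(c,x)-1$) is correct, and it addresses a requirement of the definition of $C(c,x)$ that the paper's proof silently ignores. One small misstatement at the end: once $c_2 \in N_R(c,x)$ and the domination property are both in hand, membership in $C(c,x)$ is immediate from the definition; Lemma~\ref{l1} is not the relevant packaging, since its hypothesis is the distance condition of (P3), not domination.

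The obstacle you flag --- showing $c_2 \in V(R)$ --- is genuine, and it cannot be closed from the stated hypotheses: as literally written, the corollary is false. Take $G = K_4$ minus the edge $cx$, with the other two vertices $c_1, c_2$, and let $R = G[\{c, c_1, x\}]$, the induced path $c\,c_1\,x$. Then $N[R] = G$ and (P3) holds vacuously for this $R$ (since $N[c_1] = V(G)$, no $y$ with $d(c_1,y)\geq 2$ exists), and $C(c,x) = N_R(c,x) = \{c_1\}$; yet $N[c_1] = N[c_2] = V(G)$, so your (and the paper's) hypotheses are satisfied while $c_2 \notin C(c,x)$ because $c_2 \notin V(R)$. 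So the statement needs the additional hypothesis $c_2 \in V(R)$ (equivalently, $C(c,x)$ should be defined with $N(c,x)$ in place of $N_R(c,x)$). This costs the paper nothing: in both invocations of Corollary~\ref{c4} --- Lemma~\ref{l2} and the proof of Theorem~\ref{t7} --- the vertex playing the role of $c_2$ already lies in some $C(c_0,x_0) \subseteq V(R)$, and since $R$ is induced, $c_2 \in V(R) \cap N(c,x)$ gives $c_2 \in N_R(c,x)$, after which your argument is complete. In short, the gap you could not close is a defect of the statement, not of your reasoning; your proposal is more rigorous than the paper's own proof.
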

\begin{proof}
For any $v\in N(c,x)$, since $c_1 \in C(c,x)$, we have $N[v] \subseteq N[c_1] \subseteq N[c_2]$. 
Hence $c_2 \in C(c,x)$.
\end{proof}

\begin{corollary}\label{c5}
Suppose $G$ satisfies $(P2)$. For any $c_1, c_2 \in V(G)$, if $N[c_1] \subseteq N[c_2]$, and $c_1 \in N(c,x)$ for some $c, x \in V(G)$ with $d(c,x) \geq 2$, then $c_2 \in C(c,x)$. 
\end{corollary}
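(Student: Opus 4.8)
The plan is to chain together Corollaries \ref{c3} and \ref{c4}, exploiting the fact that a graph satisfying $(P2)$ also satisfies $(P3)$ with $R = G$. This identification is the conceptual crux: once $R = G$ we have $N_R(c,x) = N(c,x)$, so the defining condition of $C(c,x)$ becomes simply that a vertex lies in $N(c,x)$ and dominates every member of $N(c,x)$, and all earlier statements about $C(c,x)$ apply verbatim.

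First I would observe that the hypothesis $c_1 \in N(c,x)$ already forces $c_1 \in C(c,x)$. By Corollary \ref{c3}, every pair of vertices in $N(c,x)$ has the same closed neighborhood, so for each $v \in N(c,x)$ we have $N[c_1] = N[v]$, and in particular $N[c_1] \supseteq N[v]$. Since we also have $c_1 \in N(c,x) = N_R(c,x)$, the membership criterion for $C(c,x)$ is satisfied, giving $c_1 \in C(c,x)$.

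Next I would feed the containment $N[c_1] \subseteq N[c_2]$ together with $c_1 \in C(c,x)$ into Corollary \ref{c4}. Because $(P2)$ implies $(P3)$ with $R = G$, the requirement $c \in V(R)$ in that corollary reduces to $c \in V(G)$, which is automatic, while $d(c,x) \geq 2$ is given by hypothesis. Corollary \ref{c4} then yields $c_2 \in C(c,x)$, which is precisely the assertion.

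I do not anticipate any genuine obstacle; the argument is a two-step reduction to already-established corollaries. The only point demanding a moment's care is confirming that the auxiliary subgraph $R$ occurring in Corollaries \ref{c2} and \ref{c4} coincides with $G$ under $(P2)$, so that $N_R$ may be freely replaced by $N$ and the hypotheses of Corollary \ref{c4} are inherited without additional verification.
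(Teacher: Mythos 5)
Your proof is correct and follows the paper's intended route: the paper states Corollary \ref{c5} without proof as an immediate consequence of the observation that $(P2)$ implies $(P3)$ with $R=G$, so that $c_1\in N(c,x)$ gives $c_1\in C(c,x)$ (exactly as in the proof of Corollary \ref{c3}) and Corollary \ref{c4} then yields $c_2\in C(c,x)$. Your only deviation is deriving $c_1\in C(c,x)$ from the conclusion of Corollary \ref{c3} rather than directly from Lemma \ref{l1}, which is an equivalent and equally valid step.
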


\begin{lemma}\label{l2}
Suppose $G$ satisfies $(P3)$ and R be the subgraph of $G$ defined in $(P3)$. Let $\cJ = \{C(c,x): c \in V(R),x \in V(G)\}$. If $J_1, J_2 \in \cJ$, then $J_1, J_2$ are either identical or disjoint.
\begin{proof}
Let $J_1 = C(c_1,x_1)$ and $J_2 = C(c_2, x_2)$.
If $J_1$ and $J_2$ are not disjoint,
we can find some $c' \in J_1 \cap J_2$.
For any $c'' \in C(c_1, x_1)$, $N[c'] = N[c'']$ by Corollary \ref{c2}.
It follows from Corollary \ref{c4},  that $c'' \in C(c_2,x_2)$.
Hence $J_1 \subseteq J_2$.
Similarly, we can show that $J_2\subseteq J_1$.
Hence $J_1 = J_2$.
\end{proof}
\end{lemma}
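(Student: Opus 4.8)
The plan is to recognize this as a \emph{partition}-type statement: a family of sets in which any two members are identical or disjoint is precisely one whose members behave like the blocks of an equivalence relation. The natural invariant to attach to each block $C(c,x)$ is the common closed neighborhood shared by all of its vertices. Indeed, by Corollary~\ref{c2} every pair of vertices lying in a single $C(c,x)$ has the same closed neighborhood, so each nonempty member of $\cJ$ determines a well-defined value $N[\,\cdot\,]$. I would then show that this value completely controls membership, so that two overlapping members must carry the same value and therefore coincide.

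Concretely, I would write $J_1 = C(c_1,x_1)$ and $J_2 = C(c_2,x_2)$, assume $J_1 \cap J_2 \neq \emptyset$, and fix a common element $c' \in J_1 \cap J_2$. For an arbitrary $c'' \in J_1$, the fact that $c',c'' \in J_1 = C(c_1,x_1)$ together with Corollary~\ref{c2} gives $N[c''] = N[c']$, and in particular $N[c'] \subseteq N[c'']$. Since $c' \in J_2 = C(c_2,x_2)$, Corollary~\ref{c4} (with $c',c''$ playing the roles of $c_1,c_2$) then forces $c'' \in C(c_2,x_2) = J_2$. As $c''$ was arbitrary, $J_1 \subseteq J_2$; the reverse inclusion $J_2 \subseteq J_1$ follows by the identical argument with the roles of the two blocks swapped, so $J_1 = J_2$.

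Since Corollaries~\ref{c2} and~\ref{c4} do all the real work, there is no genuine obstacle here, and the only points needing care are bookkeeping. One must invoke the corollaries in the correct directions: Corollary~\ref{c2} supplies equality of closed neighborhoods \emph{within} a single block, and it is essential that the common element $c'$ lies in the \emph{target} block $J_2$ so that Corollary~\ref{c4} can promote $c''$ into $J_2$. The symmetry of the hypothesis then makes the reverse inclusion automatic, so I would simply assert it rather than recompute. I would also note that each block $C(c,x)$ is only defined when $d(c,x) \ge 2$ and is nonempty by Corollary~\ref{c1}; any degenerate empty set appearing in $\cJ$ is vacuously disjoint from every other member and does not affect the conclusion.
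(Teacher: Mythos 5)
Your proof is correct and is essentially identical to the paper's: both fix a common element $c' \in J_1 \cap J_2$, use Corollary~\ref{c2} to get $N[c''] = N[c']$ for any $c'' \in J_1$, and then apply Corollary~\ref{c4} (using $c' \in J_2$) to promote $c''$ into $J_2$, with the reverse inclusion by symmetry. Your added remark about possibly empty or degenerate members of $\cJ$ is a harmless extra precaution not present in the paper.
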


\begin{lemma} \label{l3}
Suppose $G$ is a vertebrate graph and $B$ is the backbone of G. For all $u,v \in V(G)$ with $d(u,v) \geq 2$, there exists a shortest path between $u$ and $v$ whose internal vertices all lie in $B$.
\begin{proof}
Note that if either of $u,v$ is in $B$, then by the second condition in the definition of vertebrate graph, we are done. Otherwise, $u,v \in G\backslash B$. We proceed by contradiction. Suppose that there exists $u,v \in V(G)\backslash V(B)$ with $d(u,v)\geq 2$ such that all shortest paths between $u,v$ have an internal vertex not in $B$. This implies that if $P$ is a shortest path between $u$ and $v$, then all internal vertices of $P$ are not in $B$ (otherwise there exists a shortest path between $u$ and $v$ whose internal vertices all lie in $B$). 

Pick such $u,v$ with the minimum distance. It easily follows that $d(u,v) = 2$. Suppose $uwv$ is the shortest path between $u$ and $v$ with $u,w,v \in V(G)\backslash V(B)$.
Let $w' \in B$ be an arbitrary neighbor of $w$ in $B$ ($w'$ exists since $N[R]=G$). If $w'$ is adjacent to both $u$ and $v$, then we are done. Otherwise, WLOG, $d(w',v)  = 2$, i.e. there exists $v' \in N_B(w',v)$ such that $N[v']\supseteq N[w]$ since $w \in N(w',v)$. In particular, $v'$ is adjacent to $u$. Hence $uv'v$ is a shortest path between $u$ and $v$ whose internal vertices all lie in $B$, which gives us a contradiction by the choice of $u$ and $v$. 

\end{proof}
\end{lemma}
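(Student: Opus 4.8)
The plan is to argue in two cases according to whether an endpoint lies in the backbone $B$, reducing the hard case (both endpoints outside $B$) to the distance-$2$ situation via a minimal-distance counterexample.

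First I would dispose of the case where some endpoint, say $u$, lies in $B$. The idea is to iterate the second defining property of a vertebrate graph: with $c = u$ and $x = v$ it furnishes a vertex $c' \in N_B(u,v)$, which lies in $B$, is adjacent to $u$, and satisfies $d(u,v) = 1 + d(c',v)$. Replacing $u$ by $c'$ and repeating, I build a walk $u = p_0, p_1, p_2, \dots$ with each $p_{i+1} \in N_B(p_i, v)$; each step strictly decreases the distance to $v$ and each $p_i$ with $i \geq 1$ lies in $B$. This terminates at $v$ and yields a shortest $u$-$v$ path whose internal vertices all lie in $B$.

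With this in hand I would treat $u, v \in V(G) \setminus V(B)$ by contradiction, choosing a counterexample with $d(u,v)$ minimum. The first observation is that every shortest $u$-$v$ path then has all of its internal vertices outside $B$: if one had an internal vertex $w \in B$, splicing the two backbone paths from the endpoint case (for $u$-$w$ and for $w$-$v$) would give a shortest $u$-$v$ path with every internal vertex in $B$, contradicting the counterexample hypothesis. Next I would show $d(u,v) = 2$: if $d(u,v) \geq 3$, pick a shortest path $u, p_1, \dots, v$; the pair $(p_1, v)$ has strictly smaller distance and both endpoints outside $B$, so by minimality it admits a shortest path with internal vertices in $B$, and prepending $u$ produces a shortest $u$-$v$ path with an internal vertex in $B$, contradicting the previous observation.

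The crux is the distance-$2$ step. Writing the shortest path as $u, w, v$ with $w \notin B$, I would use that $B$ dominates $G$, i.e.\ $N[B] = G$, to select a neighbor $w' \in B$ of $w$. If $w'$ were adjacent to both $u$ and $v$ we would be done, so without loss of generality $d(w',v) = 2$ and $w \in N(w',v)$. Applying the second defining property with $c = w'$ and $x = v$ yields $v' \in N_B(w', v)$ satisfying $N_G[v'] \supseteq N_G[w]$; since $u \in N_G[w]$ this forces $v' \sim u$, while membership in $N_B(w',v)$ forces $v' \sim v$, so $u, v', v$ is a shortest path with internal vertex $v' \in B$ --- the desired contradiction. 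I expect this domination transfer, using $N_G[v'] \supseteq N_G[w]$ to carry the adjacency with $u$ from $w$ over to the backbone vertex $v'$, to be the main obstacle, since it is the step that genuinely exploits the backbone structure rather than mere connectivity or dominance.
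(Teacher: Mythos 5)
Your proposal is correct and follows essentially the same route as the paper's proof: reduce to a minimal-distance counterexample with both endpoints outside $B$, show $d(u,v)=2$, take a dominating neighbor $w'\in B$ of the middle vertex $w$, and use the backbone condition at $(w',v)$ to obtain $v'\in N_B(w',v)$ with $N_G[v']\supseteq N_G[w]$, transferring the adjacency with $u$ to $v'$. The only difference is that you spell out the steps the paper leaves implicit (iterating the backbone condition for the endpoint-in-$B$ case, and the splicing argument forcing $d(u,v)=2$), and these details are filled in correctly.
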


\subsection{\bf Proof of Theorem \ref{t6}.}

\begin{proposition}
If $G$ is connected and satisfies (P2), then $G$ is an extended block graph.
\end{proposition}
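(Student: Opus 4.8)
The plan is to realize $G$ as a blow-up of a genuine block graph and then invoke Howorka's characterization. Since $G$ satisfies (P2), it also satisfies (P3) with $R=G$, so Corollary~\ref{c3} applies: for any $c,x$ with $d(c,x)\ge 2$, all vertices of $N(c,x)$ share the same closed neighborhood, so $N(c,x)$ is a clique of mutual true twins and $C(c,x)=N(c,x)$. By Lemma~\ref{l2} the members of $\cJ$ are pairwise disjoint. I would therefore let $G^{\#}$ be the graph obtained by contracting each member of $\cJ$ (leaving every vertex in no nontrivial such set as a singleton); I call each contracted set a \emph{joint block} and its image a \emph{joint vertex}. The goal is to show that $G^{\#}$ is a block graph and that $G$ is recovered from $G^{\#}$ by blowing up exactly the joint vertices, and that these are cut vertices — which is precisely what it means for $G$ to be an extended block graph.

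First I would record the easy structural facts. Each joint block is a module, since its vertices are mutual true twins (Corollaries~\ref{c3} and~\ref{c2}); hence adjacency between distinct parts lifts cleanly to $G$ and the contraction preserves distances, $d_{G^{\#}}(\bar u,\bar v)=d_G(u,v)$ whenever $\bar u\neq\bar v$. In particular all distances needed below may be computed in $G$ on chosen representatives.

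Next I would verify the two defining properties of a block graph. For diamond-freeness: an induced diamond in $G^{\#}$ lifts to an induced diamond in $G$ whose two degree-three vertices $c,d$ are common neighbors of the non-adjacent (distance-two) pair $a,b$, so $c,d\in N(a,b)$; but then $c,d$ lie in the \emph{same} joint block $N(a,b)\in\cJ$ and were identified in $G^{\#}$, a contradiction. For chordality I would exclude induced cycles of length at least $4$ by applying (P2) in $G$ to a lift of the cycle $v_0v_1\cdots v_{k-1}$: taking $c=v_0$, $x=v_{\lfloor k/2\rfloor}$, $c'=v_1\in N(c,x)$, and $y=v_{\lfloor k/2\rfloor+1}$ forces both alternatives of (P2) to fail once the relevant distances are known, producing the required contradiction.

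The main obstacle is exactly this last step. The displayed choice disposes of induced $4$- and $5$-cycles at once, because such cycles are automatically isometric, so cycle distances may be substituted directly into (P2). A longer induced cycle, however, need not be isometric in $G^{\#}$ (external shortcuts may shorten antipodal distances), so one cannot simply read off $d(v_0,v_{\lfloor k/2\rfloor})=\lfloor k/2\rfloor$. I expect to resolve this by induction on the cycle length: using diamond-freeness together with the already-excluded shorter induced cycles, one shows that a shortest induced cycle of length at least $6$ must in fact be isometric, and then the (P2) computation applies and yields a chord. Granting chordality and diamond-freeness, Howorka's theorem~\cite{Howorka} (equivalently, property (P1)) shows that $G^{\#}$ is a block graph. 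Finally I would check the reconstruction: a nontrivial joint block $N(c,x)$ becomes, after contraction, the unique common neighbor of the non-adjacent pair $\bar c,\bar x$ in the block graph $G^{\#}$ (a block graph has no two common neighbors of a non-adjacent pair, else an induced $C_4$ or diamond arises), hence it is a cut vertex; blowing these cut vertices back up into their twin-cliques returns $G$ exactly, so $G$ is an extended block graph.
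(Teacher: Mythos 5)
Your route is genuinely different from the paper's, and most of it is sound: the true-twin classes $C(c,x)=N(c,x)$ are pairwise disjoint clique modules (Corollaries \ref{c2}, \ref{c3} and Lemma \ref{l2}), the contraction $G^{\#}$ preserves distances, your diamond-freeness argument is correct, and so is the reconstruction of $G$ from $G^{\#}$ by blowing up cut vertices. But the chordality step has a genuine gap, and it sits exactly where you flag it. Your displayed (P2) computation with $c=v_0$, $x=v_{\lfloor k/2\rfloor}$, $c'=v_1$, $y=v_{\lfloor k/2\rfloor+1}$ needs the cycle distances to be the graph distances, i.e., it needs the induced cycle to be isometric. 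For $k\geq 6$ you rest this on the unproven assertion that a shortest induced cycle of length at least $6$ in a diamond-free graph with no shorter induced cycles of length at least $4$ is isometric. That assertion is plausible (for $C_6$ one can check that a common neighbor of an antipodal pair forces an induced $C_4$, an induced $C_5$, or a diamond), but it is not obvious, it needs its own induction with nontrivial case analysis for longer cycles and longer shortcuts, and you do not carry it out. As submitted, the proof is incomplete at its crucial step.

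The gap is fillable, and more cheaply than by the induction you propose: prove directly from (P2) that \emph{every induced path of $G$ is isometric}, by induction on its length $j$. For an induced path $u_0u_1\cdots u_j$ with $j\geq 3$, the induction hypothesis gives $d(u_0,u_{j-1})=j-1$, $d(u_1,u_{j-1})=j-2$ and $d(u_1,u_j)=j-1$, so $u_1\in N(u_0,u_{j-1})$; applying (P2) with $c=u_0$, $x=u_{j-1}$, $c'=u_1$, $y=u_j$ (note $d(c',y)=j-1\geq 2$), the alternative $d(u_{j-1},u_j)=d(u_{j-1},u_1)+d(u_1,u_j)=2j-3\geq 3$ is absurd, so $d(u_0,u_j)=j$. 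Since the shorter arc between any two vertices of an induced cycle is an induced path, every induced cycle of $G$ (hence of $G^{\#}$) is isometric, and your (P2) computation then eliminates all induced cycles of length at least $4$ in one stroke, with no appeal to diamond-freeness or to minimality of the cycle. (Two smaller points: what you actually verify is ``chordal and diamond-free,'' which characterizes block graphs by a standard fact that is not Howorka's theorem --- Howorka's theorem is the equivalence with (P1); and for a joint block $C(c,x)$ with $d(c,x)>2$ you should first replace $x$ by the second vertex of a shortest $c$--$x$ path, via Lemma \ref{l2}, before calling it the common-neighbor set of a distance-two pair.) For comparison, the paper avoids global cycle arguments entirely: it fixes a diametral vertex $u$, takes a component $C$ of the farthest level $N_d(u)$, uses (P2) and Corollary \ref{c3} to show $N(C)\cup C$ is a clique, deletes $C$, and inducts on $|V(G)|$. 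Your quotient description is arguably more structural and yields the block-graph skeleton explicitly, but the paper's peeling argument never has to confront non-isometric cycles, which is precisely the difficulty your version leaves open.
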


\begin{proof}
	
Suppose $G$ is connected and $G$ satisfies $(P2)$.
Let $u,u' \in V(G)$ such that $d(u,u')$ equals to the diameter $d$ of $G$. Fix $u$ from now on. Define \[N_k(u)= \{v\in V(G): d(u,v) = k\}.\] 
Consider one of the components of the graph induced by $N_d(u)$. Call this component C. Let $N(C) = \{v\in N_{d-1}(u): vw\in E(G) \;\text{for some } w \in C\}$.\\\\
{\bf Claim:} $N(C) \cup C$ induces a complete graph.

\begin{proof}[Proof of Claim]
Let $v,w \in V(C)$ and $vw\in E(G)$. Let $v' \in N(v,u)$, i.e. $v'$ is the neighbor of $v$ in the shortest $vu$ path. Similarly, $w' \in N(w,u)$. Suppose $v'\neq w'$. We will show that $v'w, w'v, w'v' \in E(G)$.

\begin{figure}[hbt]
\begin{center}

\tikzstyle{straight edge}=[]
\tikzstyle{vertex}=[circle,fill=black,inner sep=1.5pt]

\begin{tikzpicture}[scale=0.8]
	\begin{pgfonlayer}{nodelayer}
		\node [style=vertex, label=above:{$v$}] (0) at (3.25, 1) {};
		\node [style=vertex, label=below:{$w$}]  (1) at (3.25, -1) {};
		\node [style=vertex, label=above:{$v'$}]  (2) at (2.5, 0.75) {};
		\node [style=vertex, label=below:{$w'$}] (3) at (2.5, -0.75) {};
		\node [style=vertex, label=left:{$u$}] (4) at (0, 0) {};
	\end{pgfonlayer}
	\begin{pgfonlayer}{edgelayer}
		\draw [style=straight edge] (2) to (1);
		\draw [style=straight edge] (3) to (0);
		\draw [style=straight edge] (0) to (1);
		\draw [style=straight edge] (2) to (3);
		\draw [style=straight edge] (4) to (0);
		\draw [style=straight edge] (4) to (1);
	\end{pgfonlayer}
\end{tikzpicture}
\end{center}
\label{fig:extendedblockgraph}
\caption{$N(C)\cup C$ induces a complete graph}
\end{figure}
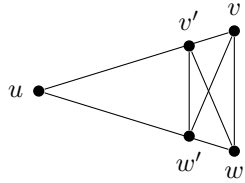
We claim that $v'$ is adjacent to $w$. Suppose not, i.e. $d(v',w) \geq 2$. Then by the property $(P2)$, either $d(v,w) = d(v',w)+1$ or $d(u,w) = d(u,v') + d(v', w)$.\\
However, 
\[ d(v,w ) = 1 < d(v',w) + 1,\]
\[ d(u,w)  = d < d+1 \leq  d(u,v') + d(v',w). \]
Contradiction. Thus $v'$ is adjacent to w. Similarly, $w'$ is also adjacent to $v$.
Now $v', w' \in N(v,u)$, by Corollary \ref{c3}, $v'w' \in E(G)$.
To show that $N(C) \cup C$ is a complete graph, it suffices to show that C induces a complete graph.
Suppose not. Then there must be a path P of length at least 2 in C. Hence there exists a subpath $v_1, v_2, v_3 \in V(C)$ such that $d(v_1, v_3) \geq 2$.
Let $v_2' \in N(v_2, u)$. By our previous argument, $v_2'$ is also adjacent to $v_1$ and $v_3$. Thus $v_2', v_2 \in N(v_1, v_3)$. It follows that by Corollary \ref{c3}, $N[v_2'] = N[v_2]$, but this is impossible since $v_2'$ is on the shortest path between $u$ and $v_2$. Hence by contradiction, C induces  a complete graph, and thus $N(C) \cup C$ induces a complete graph.
\end{proof}

Now we will prove $G$ is an extended block graph by inducting on $|V(G)|$. 
The base case is clearly true since a single vertex is an extended block graph.
Now let's consider $G' = G\backslash C$. Since $N(C)$ is complete graph, we have \[\forall x,y \in V(G'), d_{G'}(x,y) = d_G (x,y).\]
Since $G$ satisfies $(P2)$, it follows that $G'$ also satisfies $(P2)$. By induction, $G'$ is an extended block graph. By Lemma \ref{l2}, $N(C)$ either is a joint block itself in $G'$ or is disjoint with any joint block in $G'$. Since $C$ it a complete graph, it follows that $G = G' \cup C$ is an extended block graph.
\end{proof}

\begin{proposition}
Suppose $G$ is an extended block graph, then $G$ satisfies (P2).
\end{proposition}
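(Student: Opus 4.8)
The plan is to show that an extended block graph $G$ satisfies $(P2)$ by exploiting its rigid structure. Recall that $G$ is obtained from a block graph $G_0$ by blowing up each cut vertex into a clique (a joint block), and that contracting each joint block recovers $G_0$. The key structural fact I would establish first is a clean description of distances and shortest paths in $G$ in terms of $G_0$: if $\pi\colon G \to G_0$ is the contraction map, then for any $a,b \in V(G)$ lying in distinct joint blocks we have $d_G(a,b) = d_{G_0}(\pi(a),\pi(b))$ (with the appropriate small correction when $a$ or $b$ sits inside a joint block traversed by the path), and every shortest $a$--$b$ path in $G$ projects to the unique shortest path in the block graph $G_0$. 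Since block graphs satisfy the four-point condition $(P1)$ by Howorka's theorem, I would like to transport that metric condition through $\pi$.

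The main verification is then the following. We are given $c,x \in V(G)$ with $d(c,x)\ge 2$, a vertex $c' \in N(c,x)$, and $y \in V(G)$ with $d(c',y)\ge 2$; we must show that either $d(c,y) = d(c,c') + d(c',y) = 1 + d(c',y)$ or $d(x,y) = d(x,c') + d(c',y)$. The heart of the argument is that $c'$ lies strictly on a shortest $c$--$x$ path, so in the block-graph structure $c'$ is a \emph{separating point}: removing the joint block containing $c'$ (or $c'$ itself when it is not a joint) separates $c$ from $x$. I would argue that any vertex $y$ with $d(c',y)\ge 2$ must reach $c'$ along a shortest path that enters through the "$c$-side" or the "$x$-side" of this separation. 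In the first case every shortest $y$--$c'$ path extends to a shortest $y$--$c$ path, giving $d(c,y) = 1 + d(c',y)$; in the second case it extends to a shortest $y$--$x$ path, giving $d(x,y) = d(x,c') + d(c',y)$. The condition $d(c',y)\ge 2$ is exactly what guarantees $y$ does not live inside the joint block of $c'$ (where both equalities could fail), so the separation cleanly assigns $y$ to one side.

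Concretely I would reduce to the block graph: let $\hat c = \pi(c)$, $\hat x = \pi(x)$, $\hat{c'} = \pi(c')$, $\hat y = \pi(y)$, and apply $(P1)$ to these four images in $G_0$. The four-point condition on $G_0$ forces, among $d(\hat c,\hat{c'}) + d(\hat x,\hat y)$, $d(\hat c,\hat x)+d(\hat{c'},\hat y)$, $d(\hat c,\hat y)+d(\hat{c'},\hat x)$, the two largest to be equal, and because $\hat{c'}$ is internal to the shortest $\hat c$--$\hat x$ path this is precisely the statement that $\hat y$ attaches to the path on one side of $\hat{c'}$. Lifting back through $\pi$ and accounting for the blow-up (which only changes distances additively by the sizes of traversed joint blocks, uniformly on both sides) recovers the desired equality in $G$. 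The main obstacle I anticipate is the bookkeeping at the blow-up: I must check that the correction terms relating $d_G$ to $d_{G_0}$ do not disturb the additive relation $d(c,y)=1+d(c',y)$ versus $d(x,y)=d(x,c')+d(c',y)$, and in particular that the hypothesis $d(c',y)\ge 2$ is strong enough to keep $y$ outside the joint block of $c'$ so that no path is "short-circuited" within that clique. I expect this to be the one genuinely delicate point; once the contraction dictionary is set up carefully, the rest follows from Howorka's characterization and the earlier corollaries.
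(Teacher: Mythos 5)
Your overall plan---reduce to the underlying block graph and invoke Howorka's four-point condition---is essentially the paper's own reduction (the paper realizes the block graph as an isometric subgraph of $G$ containing $c,x,c',y$ instead of contracting, but the two devices are interchangeable). However, there are two concrete problems. First, the case analysis in your separation argument is crossed, and as stated the justification in each case is false. If $y$ lies on the $c$-side of the separation given by the joint block $J_{c'}$ of $c'$, then it is the $y$--$x$ paths (not the $y$--$c$ paths) that are forced through $J_{c'}$; since all vertices of $J_{c'}$ share the same closed neighborhood, this yields $d(x,y)=d(x,c')+d(c',y)$, not $d(c,y)=1+d(c',y)$. Concretely, take the path $y\,c\,c'\,x$ (a tree, hence a block graph): here $y$ reaches $c'$ through the $c$-side and $d(c',y)=2$, yet $d(c,y)=1\neq 3=1+d(c',y)$, while the other alternative $d(x,y)=3=d(x,c')+d(c',y)$ is the one that holds. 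Your claim that ``every shortest $y$--$c'$ path extends to a shortest $y$--$c$ path'' is exactly backwards. Because the target statement is a disjunction, swapping the two conclusions repairs the argument (and a $y$ in a third component of $G\setminus J_{c'}$ satisfies both equalities), but as written the proof asserts false intermediate statements.

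Second, you misidentify the delicate point of the lifting step. There are no ``correction terms'' to track: in the blow-up, every vertex of a joint block is adjacent to every neighbor of the original cut vertex, so a geodesic never uses an edge inside a joint block, and $d_G(a,b)=d_{G_0}(\pi(a),\pi(b))$ holds exactly whenever $\pi(a)\neq\pi(b)$. The genuine degenerate case, which your proposal never addresses, is when two of the four vertices lie in the \emph{same} joint block, so that the projection collapses them and the four-point condition in $G_0$ is applied to only three points. Checking the pairs against the hypotheses ($d(c,x)\geq 2$, $c'\in N(c,x)$, $d(c',y)\geq 2$) rules out every coincidence except $\pi(x)=\pi(y)$, i.e.\ $x$ and $y$ in one joint block; there $d_G(x,y)=1$ while $d_{G_0}(\pi(x),\pi(y))=0$, so nothing lifts, and one must argue directly: $N[x]=N[y]$ forces $d(c,y)=d(c,x)=1+d(c',x)=d(c,c')+d(c',y)$. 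This is precisely the exceptional case the paper's proof singles out and handles separately; without it, and without fixing the crossed cases, your argument is incomplete.
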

\begin{proof}
Let $G$ be an extended block graph. Observe that for $c,x,c',y \in V(G)$ with $c' \in N(c,x)$ and $d(c',y) \geq 2$, if no two of the four vertices are in the same joint block, then the block graph $B \subseteq G$ associated with $G$ containing $c,x,c',y$ preserves the distance function, i.e. \[\forall s,t \in V(B), d_G(s,t) = d_B(s,t).\]
Since $B$ is a block graph, it satisfies $(P1)$, thus $(P2)$. Hence $c,x,c',y$ satisfies $(P2)$.
Therefore, it suffices to show $G$ satisfies $(P2)$ for $c,x,c',y \in V(G)$ when there exists two of the four vertices that are in the same joint block. Note that if two vertices $s, t$ are in the same joint block, then $N[s] = N[t]$. It's easy to check that no two of the four vertices   $c,x,c',y $ can be in the same joint block except the pair $x,y$. But if $x,y$ are in the same joint block, it follows that $d(c,y) = d(c,c') + d(c',y)$, in which case we are done.
\end{proof} 


\subsection{\bf Proof of Theorem \ref{t7}}

\begin{proposition}
If $G$ satisfies (P3), then $G$ is a vertebrate graph.
\end{proposition}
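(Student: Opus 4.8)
The plan is to construct the backbone $B$ directly from the subgraph $R$ guaranteed by property $(P3)$, and then verify the two defining conditions of a vertebrate graph. The natural candidate is to take $B$ to be an induced subgraph of $G$ built from the joints, using the sets $C(c,x)$ developed in the technical lemmas. Concretely, I would let $B$ be the subgraph of $G$ induced on $R$ together with all the ``representative'' joints selected by the $C(c,x)$ construction; the key structural fact I expect to use is Lemma \ref{l2}, which says the family $\cJ = \{C(c,x)\}$ partitions into disjoint blocks of mutually-equivalent vertices (those with identical closed neighborhoods, by Corollary \ref{c2}). These equivalence classes should become the joint blocks of the extended block graph.

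First I would show that $B$, with this choice, is an extended block graph. The idea is that contracting each class $C(c,x)$ to a single vertex should yield a graph satisfying the four-point condition $(P1)$, hence a block graph by Howorka's characterization; then the sets $C(c,x)$ reappear as the joint blocks when we blow the cut vertices back up. To make this rigorous I would verify that distances in $B$ agree with distances in $G$ along the relevant shortest paths, using Corollary \ref{c1} (each $C(c,x)$ is nonempty) to guarantee that shortest paths from $c\in V(R)$ can always be routed through a representative joint $c'\in C(c,x)$. This lets me transfer the $(P3)$ inequalities into the $(P2)$/$(P1)$ inequalities needed to invoke the earlier theorems and the metric characterization of block graphs.

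Next I would check the second condition in the definition of a vertebrate graph: for all $c\in V(B)$ and $x\in V(G)$ with $d(c,x)\geq 2$, there is $c'\in N_B(c,x)$ with $N_G[c']\supseteq N_G[v]$ for every $v\in N_G(c,x)$. This is essentially the defining property of $C(c,x)$ repackaged: by Corollary \ref{c1} we may pick $c'\in C(c,x)$, and by definition $N[c']\supseteq N[v]$ for all $v\in N(c,x)$. The one subtlety is that the vertebrate condition is quantified over all $c\in V(B)$, whereas $(P3)$ and $C(c,x)$ are stated only for $c\in V(R)$; so I would need to argue that every joint added in building $B$ out of $R$ still satisfies the required neighbor-domination property, which should follow because joints in the same class share their closed neighborhoods (Corollary \ref{c2}) and Corollary \ref{c4} lets me transfer membership in $C(c,x)$ across vertices with comparable neighborhoods.

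The main obstacle I anticipate is the bookkeeping that links the abstract partition $\cJ$ to an honest extended block graph structure on $B$, and in particular verifying that $B$ is \emph{isometrically embedded enough} that the contraction argument produces a genuine block graph rather than merely a graph where distances happen to agree on $R$. Establishing that the contracted graph satisfies $(P1)$ will require carefully translating each instance of the four-point condition into an application of $(P3)$ via a well-chosen representative joint, and ensuring no two of the four relevant vertices collapse into the same class in a way that breaks the argument (mirroring the case analysis in the $(P2)\Rightarrow$ extended block graph direction). Once that correspondence is nailed down, the rest should follow by assembling the pieces: $B$ is an extended block graph, and the neighbor-domination condition holds, so $G$ is vertebrate with backbone $B$.
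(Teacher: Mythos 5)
Your construction of the backbone is where the proof breaks. You take $B$ to be the subgraph induced on $R$ together with the representative joints; since every $C(c,x)\subseteq N_R(c,x)\subseteq V(R)$, this makes $B=G[V(R)]$. But $R$ itself need not be an extended block graph, and contracting the classes $C(c,x)$ cannot repair this, because $(P3)$ only demands that \emph{some} neighbor $c'\in N_R(c,x)$ have the metric property: $R$ may contain many vertices that can never serve as joints. Concretely, let $V(G)=\{p_1,p_2,p_3,g_1,g_2\}$ with edges $p_1p_2$, $p_2p_3$, and $g_ip_1,g_ip_2$ for $i=1,2$ (a diamond $K_4-g_1g_2$ plus a pendant $p_3$ at $p_2$). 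Since $p_2$ is adjacent to every other vertex, the clause ``for all $y$ with $d(c',y)\geq 2$'' is vacuous when $c'=p_2$, and $p_2\in N(c,x)$ for every pair with $d(c,x)=2$; hence $(P3)$ holds with $R=G$. Yet $G$ is not an extended block graph: the diamond is a non-complete block, and no two adjacent vertices have equal closed neighborhoods, so no blow-down to a block graph exists. Here every class $C(c,x)$ equals $\{p_2\}$, so your contraction does nothing, and the four-point condition indeed fails on the contracted graph ($d(g_1,g_2)+d(p_1,p_2)=3$ while the other two sums equal $2$). The correct choice, which is the paper's, is to take $B$ induced on \emph{only} $\bigcup C(c,x)$ --- in this example the single vertex $p_2$ --- and discard the rest of $R$.

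A second, related point: the subtlety you flag (that the vertebrate condition quantifies over $c\in V(B)$ while $(P3)$ speaks of $c\in V(R)$) is backwards. With the correct backbone, $V(B)=\bigcup C(c,x)\subseteq V(R)$, so $(P3)$ applies to every $c\in V(B)$ automatically, and Lemma \ref{l1} guarantees the representative $c'$ lies in $V(B)$, hence in $N_B(c,x)$. The genuine subtlety is the reverse one: to invoke Theorem \ref{t6} you must show that \emph{every} $c'\in N_B(c,x)$ --- not merely the representative $c''$ produced by $(P3)$ --- satisfies the $(P2)$ inequalities. The paper does this by proving $N[c']=N[c'']$ via Corollaries \ref{c2} and \ref{c4} and then transferring the metric property from $c''$ to $c'$; this step is absent from your outline. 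Your verification of the neighbor-domination condition is essentially the paper's argument and goes through once $B$ is chosen correctly, but as written the proposal fails at its foundational step.
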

\begin{proof}
Suppose $G$ satisfies $(P3)$. Let $S =\displaystyle\bigcup_{\substack{c\in V(R), x \in V(G)\\ d(c,x) \geq 2}} C(c,x)$ where $R$ is defined in $(P3)$ and let $B = G[S]$. \\

{\bf Claim:} $B$ is an extended block graph. 

By Theorem \ref{t6}, it suffices to show that $B$ satisfies $(P2)$. Observe that for all $x,y \in V(B)$, $d_B(x,y) = d_G(x,y)$.
Let $c,x \in V(B), c' \in N_B(c,x)$. By (P3), there exists $c'' \in N_R(c,x)$ such that $\forall y \in V(G)$ with $d(c'',y)\geq 2$, either $d(c,y) = d(c'',y) + 1$ or $d(x,y) = d(x,c'') + d(c'',y)$.
We claim that $N[c'] = N[c'']$:
By Lemma \ref{l1}, $c'' \in C(c,x) \subseteq V(B)$. By the definition of $C(c,x)$, it follows that $N[c'] \subseteq N[c'']$. Now since $c' \in V(B)$, it follows that $c' \in C(c_0,x_0)$ for some $c_0\in V(R), x_0\in V(G)$. By Corollary \ref{c4}, it follows that $c'' \in C(c_0,x_0)$. Now since $c',c'' \in C(c_0,x_0)$, by Corollary \ref{c2} we have $N[c'] = N[c'']$.
By our choice of $c''$, it then follows that  $\forall y \in V(B)$ with $d(c',y)\geq 2$, either $d(c,y) = d(c',y) + 1$ or $d(x,y) = d(x,c') + d(c',y)$. Hence $B$ is indeed an extended block graph.\\

It remains to show that for all $c \in V(B), x \in V(G)$ with $d(c,x) \geq 2$, there exists $c' \in N_B(c,x)$ such that $N[c'] \supseteq N[v]$ for all $v \in N(c,x)$. 
Indeed, since $G$ satisfies (P3), there exists $c' \in N_R(c,x)$ such that for all $y \in V(G)$ with $d(c',y) \geq 2$, we have either $d(c,y) = d(c,c') + d(c',y) $ or $d(x,y) = d(x,c') + d(c',y)$. Note that by Lemma \ref{l1}, $c' \in C(c,x)\in V(B)$. We claim $c'$ has the above property.
Suppose not, there exists $v \in N(c,x)$ and $y' \in N[v]\backslash N[c']$. Note that $d(c',y') \geq 2$. However, 
\[d(c,y') \leq d(c,v) + d(v,y') < d(c,c') + d(c',y'),\]
\[d(x,y') \leq d(x,v)  + d(v,y') < d(x,c') + d(c',y').\]
which contradicts the (P3) condition.\\

\end{proof}

\begin{proposition}
If $G$ is a vertebrate graph, then $G$ satisfies $(P3)$.
\end{proposition}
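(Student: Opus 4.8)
The plan is to verify that a vertebrate graph $G$ with backbone $B$ satisfies $(P3)$ by taking $R = B$. The first thing to check is that this is even a legal choice, i.e. that $N[B] = G$. I would establish this by a minimal-distance argument: if some $v \in V(G) \setminus N[B]$ existed, pick $c \in V(B)$ minimizing $d(c,v)$; then $d(c,v) \ge 2$, and the second defining property of a vertebrate graph produces $c' \in N_B(c,v) \subseteq V(B)$ with $d(c',v) = d(c,v) - 1$, contradicting minimality. That same property, taking $c'$ to be the dominating neighbor satisfying $N[c'] \supseteq N[v]$ for all $v \in N(c,x)$, supplies exactly the candidate witness demanded by $(P3)$, so the content of the proof reduces to a single dichotomy. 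Along the way I would record two facts to use freely: by Lemma \ref{l3}, $B$ is isometric in $G$, and since $B$ is an extended block graph it satisfies $(P2)$ by Theorem \ref{t6}.

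Fix $c \in V(B)$ and $x \in V(G)$ with $d(c,x) \ge 2$, let $c'$ be the dominating neighbor, and take any $y$ with $d(c',y) \ge 2$. Since $d(c,c') = 1$ one always has $d(c,y) \le 1 + d(c',y)$, so the first alternative $d(c,y) = d(c,c') + d(c',y)$ fails precisely when $d(c,y) \le d(c',y)$. The goal is then to show that this forces the second alternative $d(x,y) = d(x,c') + d(c',y)$, i.e. that $c'$ lies on a shortest $x$--$y$ path. The key local step, where the domination hypothesis enters, is the following: if a shortest $c$--$y$ path began with an edge $cz$ having $z \in N(c,x)$, then its next vertex $z'$ would satisfy $z' \in N[z] \subseteq N[c']$, whence $d(c',y) \le 1 + (d(c,y) - 2) < d(c,y)$, contradicting $d(c,y) \le d(c',y)$. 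Thus, once the first alternative fails, every shortest $c$--$y$ geodesic must leave $c$ \emph{away} from the $x$-direction.

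From here I would push the geodesics into the backbone using Lemma \ref{l3}, so that the relevant shortest $c$--$y$, $c'$--$y$ and $c'$--$x$ paths have all interior vertices in $B$, and then exploit the extended-block-graph structure of $B$ (equivalently $(P2)$ from Theorem \ref{t6}, together with the four-point behaviour of its associated block graph) to conclude that $c'$ must sit on a shortest $x$--$y$ path, giving $d(x,y) = d(x,c') + d(c',y)$. Small cases ($d(c,x) = 2$, or $d(c,y) \le 1$) would be dispatched directly, again using that the dominating $c'$ absorbs the closed neighborhoods of all vertices of $N(c,x)$.

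I expect the main obstacle to be precisely the interaction between the \emph{local} control supplied by domination at $c$ and the \emph{global} positions of $x$ and $y$, which in general lie outside $B$. The domination property only constrains one step out of $c$ toward $x$, whereas the second alternative is a statement about the full $x$--$y$ metric; bridging this gap requires routing every geodesic through the isometric backbone via Lemma \ref{l3} and then transferring the $(P2)$ conclusion on $B$ back to $G$ without introducing shortcuts. Making this transfer airtight---in particular ruling out a shorter $x$--$y$ path that avoids $c'$---is the technical heart of the argument and the step I would spend the most care on.
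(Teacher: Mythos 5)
Your setup is sound: taking $R=B$, verifying $N[B]=G$ by the minimal-distance argument, and using the dominating neighbor $c'$ from the vertebrate definition as the $(P3)$ witness all match the paper, and your local observation (a $c$--$y$ geodesic cannot start into $N(c,x)$ once $d(c,y)\leq d(c',y)$) is correct. But the proof stops exactly where the proof has to happen: the deduction that failure of $d(c,y)=d(c,c')+d(c',y)$ forces $d(x,y)=d(x,c')+d(c',y)$ is only announced (``push the geodesics into the backbone \dots\ and then exploit the extended-block-graph structure \dots\ to conclude''), and you yourself flag it as the unfinished technical heart. Knowing that $B$ is isometric (Lemma \ref{l3}) and satisfies $(P2)$ is not enough, because $x$ and $y$ generally lie outside $B$; the statement you need is global, about the $x$--$y$ metric in $G$, and no amount of four-point reasoning inside $B$ alone delivers it. So as written this is a genuine gap, not a complete argument.

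What the paper does to close that gap is a separation argument, which is the idea missing from your sketch. Working in $G'=G[V(B)\cup\{x,y\}]$ (distances preserved by Lemma \ref{l3}), it sets $J=J_{c'}\cup N_{G'}(c,x)$, where $J_{c'}$ is the joint block of $B$ containing $c'$, and proves---by a two-case analysis of a hypothetical shortest $c$--$x$ path avoiding $J$---that $J$ separates $c$ from $x$ in $G'$. Since every $v\in J$ satisfies $N[v]\subseteq N[c']$ and $y\notin J$ (as $d(c',y)\geq 2$), the vertex $y$ must be cut off by $J$ from either $c$ or $x$; any geodesic from $y$ to that vertex crosses $J$ and can be rerouted through $c'$ at no extra length, which yields the corresponding equality. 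Two further points your plan would eventually run into: the paper does not use the raw backbone as $R$ but first replaces it by the subgraph of \emph{dominating} joints (this refinement is invoked inside the separation claim, ``by our choice of $B$''), and the dichotomy you aim for is obtained from the cut structure plus domination, not from $(P2)$ on $B$. If you supply the separator $J$ and its two-case proof, your outline becomes essentially the paper's proof; without it, the central implication is unproven.
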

\begin{proof}
Let $G$ be a vertebrate graph. Let $B'$ be the maximal backbone (which is an extended block graph) of a vertebrate graph $G$. We want to show that $G$ satisfies $(P3)$.

Since $B'$ is chosen to be maximal, every vertex in $V(G)\backslash V(B')$ is adjacent to at least some joint of $B'$. Let $B = \{c' \in N_{B'}(c, x): c\in V(B'), x\in V(G)$ and $N_G[c'] \supseteq N_G[v]$ for all $ v \in N_G(c,x)\}$. It's easy to see that every vertex in $V(G)$ is adjacent to some vertex in $B$. Moreover, $B$ is also an extended block graph. We will use $B$ as the backbone of $G$ from now on.

Let $R = B$. Clearly $N[R]=N[B]=G$. Suppose $c \in V(R),x \in V(G)$ with $d(c,x)\geq 2$. Since $G$ is a vertebrate graph, there exists $c' \in N_B(c,x)$ such that $N[c'] \supseteq N[v]$ for all $v \in N_G(c,x)$. We want to show that for all $y \in V(G)$ with $d(c',y)\geq 2$, either $d(c,y) = d(c',y)+1$ or $d(x,y) = d(x,c') + d(c',y)$.
Let $G' = G[V(B) \cup \{x,y\}]$. Since $G$ is a vertebrate graph and $G'$ is connected, it follows that $G'$ is also a vertebrate graph. Moreover, by lemma 3, for all $u,v \in V(B) \cup \{x,y\}$, $d_{G}(u,v) = d_{G'}(u,v)$. Hence it suffices to prove our claim in $G'$ from now on.

Let $P$ be the shortest path between $c$ and $x$ in $G'$. Note that $c'$ is a joint vertex of $B$. Let $J_{c'}$ be the joint block of B containing $c'$. Let $J = J_{c'} \cup N_{G'}(c,x)$. Observe that by our choice of $c'$, $N[c'] \supseteq N[v]$ for all $v \in J$.

\begin{figure}[hbt]
\begin{center}

\tikzstyle{straight edge}=[]
\tikzstyle{vertex}=[circle,fill=black,inner sep=1.5pt]

\begin{tikzpicture}[scale=0.8]
	\begin{pgfonlayer}{nodelayer}
		\node [style=vertex,label=below:{$c$}] (0) at (-4, 0) {};
		\node [style=vertex,label=below:{$c'$}] (1) at (-2.5, 0) {};
		\node [style=vertex,label=below:{$x'$}] (2) at (0, 0) {};
		\node [style=vertex,label=below:{$x$}] (3) at (1, 0) {};
		\node [style=vertex,label=above:{$x''$}] (4) at (0, 2) {};
		\node [style=vertex,label=above:{$c_1$}] (5) at (-3, 0.5) {};
		\node [style=vertex,label=above:{$c_2$}] (6) at (-2, 1) {};
		\node [style=vertex] (7) at (-1, 1.5) {};
		\node [style=vertex,label=left:{$z$}] (8) at (0, 1) {};
		\node [style=vertex] (9) at (-1.25, 0) {};
	\end{pgfonlayer}
	\begin{pgfonlayer}{edgelayer}
		\draw [style=straight edge] (0) to (3);
		\draw [style=straight edge] (0) to (5);
		\draw [style=straight edge] (5) to (6);
		\draw [style=straight edge] (6) to (7);
		\draw [style=straight edge] (7) to (4);
		\draw [style=straight edge] (4) to (3);
		\draw [style=straight edge] (4) to (8);
		\draw [style=straight edge] (8) to (2);
		\draw [style=straight edge] (8) to (3);
	\end{pgfonlayer}
\end{tikzpicture}

\end{center}
\label{fig:separator}
\caption{$c$ and $x$ are in different components of $G'\backslash J$}
\end{figure}

{\noindent\bf Claim:} $c$ and $x$ are in different components of $G' \backslash J$. 
\begin{proof}[Proof of Claim]
Suppose not, i.e. there exists another path $Q = c c_1 c_2 ... x''x$ from $c$ to $x$ such that $Q$ does not pass through $J$ ($c_2$ exists since length of $Q$ is larger than length of $P$). Take $Q$ to be the shortest such path. Since $G'$ is a vertebrate graph and $d(c',y) \geq 2$, we can assume that the internal vertices of $Q$ all lie in $B$. 

Suppose $x' = N_P(x,c)$ and $x'' = N_Q(x,c)$. Observe that the joint blocks of all vertices in $P$ are all distinct. Same holds for $Q$. Let $J_c$ and $J_{c_1}$ be the joint block of $c$ and $c_1$ respectively. 
\begin{description} 
\item[Case 1:] $c_1$ is not adjacent to $c'$. Then c is on the shortest path between $c_1$ and $c'$. Since $B$ is an extended block graph, it follows that $x''$, $x'$ should be in different components of $B\backslash J_c$. However, since $x$ is adjacent to both $x'$ and $x''$, by lemma 3, it follows that $d_B(x',x'') \leq 2$. In particular, it follows that $d_B(x',x'') = 2$ and $c \in N_B(x'',x')$. By our choice of $B$, it follows that $x\in N_{G'}(c)$ which contradicts that $d(c,x) \geq 2$. Hence by contradiction, $c$ and $x$ are in different components of $G' \backslash J$.

\item[Case 2:] $c_1$ is adjacent to $c'$. Then $c_2$ cannot be adjacent to $c'$. Otherwise by our choice of $B$ and $c_1$, for all $v\in N_B(c,c_2)$, $N_{G}[v] \subseteq N_{G}[c_1]$. Hence if $c'$ is adjacent to $c_2$, it will follow that $N_G[c'] \subseteq N_G[c_1]$ and $c_1 \in N_{G'}(c,x) \subset J$, which contradicts that $Q$ does not pass through $J$. Thus by contradiction, $c_2$ is not adjacent to $c'$, i.e. $c_1$ is on the shortest path between $c_2$ and $c'$. Hence $x'$ and $x''$ are in different components of $G'\backslash J_{c_1}$. Similar to Case 1, $d_B(x',x'') \leq 2$. Thus it follows that $c_1 \in N_B(x'',x')$ and $x \in N_{G'}(c)$. But this will imply that $c_1 \in N_{G'}(c,x) \subset J$, contradicting that $Q$ does not pass through $J$ again.  
\end{description}
Hence in both cases, by contradiction, $c$ and $x$ are in different components of $G' \backslash J$. 
\end{proof}

Now $y \notin J$ since $d(c',y)\geq 2$. Hence $y$ is either not in the component of $G'\backslash J$ containing $c$ or not in the component containing $x$. If $y$ is not in the component of $G'\backslash J$ containing $c$, then the shortest path between $y$ and $c$ must pass through $J$. But for all $v \in J, N[v] \subseteq N[c']$. Hence it follows that 
\[d(c,y) = d(c,c') + d(c',y)\]
Similarly, if $y$ is not in the component of $G'\backslash J$ containing $x$, then 
\[d(x,y) = d(x,c')+d(c',y)\]

\end{proof}

\subsection{\bf Proof of Theorem \ref{t8}.}
Let $G$ be a simple connected graph and $H$ be an isometric subgraph of G. Assume at least two cops are available (The extra cop is only needed for finitely many steps).
We claim that if $H$ is a vertebrate graph, then it is 1-guardable.

Let $R \subseteq H$ be the subgraph of $H$ defined in $(P3)$.
  Move a cop to a vertex of $R$. Assume that at this point, the cop is at vertex $c\in V(R)$ and the robber is at vertex $r\in V(G)$. Define
a function $f\colon V(H)\times V(G)\to \mathbb{Z}$ as follows:
\begin{equation}
  \label{eq:frc}
f(c,r)=\min_{x\in V(H)} \{d(r,x)-d(c,x)\}.  
\end{equation}

By the triangle inequality, we have 
\begin{equation}
\label{eq:tri}
d(r,c)\geq f(r,c)\geq -d(r,c).
\end{equation}

{\noindent\bf Claim:} One cop can maintain $f(r,c)$ non-decreasing.
That is, if $f(r,c) \le 0$ and the robber moves from $r$ to a new position
$r'$, then the cop can move to a position $c'\in V(R)$ such that
\begin{equation}
  \label{eq:inc}
f(r',c')\geq f(r,c).  
\end{equation}
The claim implies that the robber can never enter $H$ after the time $f(r,c)\geq 0$.

\begin{proof}Assume $f(r,c)=-k$ for some nonnegative integer $k$. Suppose that the robber moves from $r$ to $r'$. (Since there is more than one cop, the robber has to move eventually.) 
%
The cop guarding $H$ will use the following strategy:
\begin{description}
	\item Case 1: If $k > 0$ and $f(r',c) > f(r,c)$, then the cop stays at its current position.
	\item Case 2: If $k = 0$ and $f(r',c) \geq f(r,c)$, then the cop stays at its current position.
	\item Case 3: Otherwise, by (P3), there exists $c' \in N(c,x)$ such that $\forall y \in V(H),\;d(c',y) \geq 2$, we have either $d(c,y) = d(c',y) +1 $ or $d(x,y) = d(x,c') + d(c',y)$. Move the cop from $c$ to $c'$.

\end{description}

We will verify that under this strategy, $f(r',c') \geq -k$.
We assume $k>0$. The case when $k=0$ will follow the same line. We may also assume that $f(r',c) \leq f(r,c)$ since otherwise the cop stays at its current position and we have $f(r',c') = f(r',c) > f(r,c) =-k$ . 
By the triangle inequality, $|f(r',c)-f(r,c)|\leq d(r',r)=1$, Thus, we must have
\begin{equation}
  \label{eq:r1c}
 -k-1\leq f(r',c)\leq -k.  
\end{equation}
Suppose $x \in V(H)$ is the vertex that minimizes $d(r',x)-d(c,x)$. Then we have
\begin{equation}
  \label{eq:x}
-k-1 \leq d(r',x)-d(c,x) \leq -k.  
\end{equation}

Now we will verify inequality \eqref{eq:inc}.
Suppose not, i.e., $f(r',c')\leq -k-1$. It follows that there is a vertex $y\in V(H)$ such that
\begin{equation}
  \label{eq:y}
d(r',y)-d(c',y)\leq -k-1.  
\end{equation}
Combining Equations \eqref{eq:x} and \eqref{eq:y}, we get
\begin{equation}
  \label{eq:xy}
  [d(r',x)+d(r',y)]-[d(c,x)+d(c',y)]\leq -2k-1.
\end{equation}


Note $d(c,c')=1$ and $d(c',y)\geq d(r',y)+k+1\geq 2$.
Hence by Property (P3), either $d(x,y)=d(x,c')+d(c',y)$ or
$d(c,y)=d(c',y)+ d(c',c)$.

There are two cases:
\begin{description} 
\item[Case 1:] $d(x,y)=d(x,c')+d(c',y)$.
Since $d(c,x) = d(c',x) + 1$, we then have 
\begin{equation}
  \label{eq:path}
  d(c,x)+d(c',y)=d(x,y)+1.
\end{equation}
Combining \eqref{eq:xy} and \eqref{eq:path}, we get
\begin{equation}
  [d(r',x)+d(r',y)]-d(x,y)\leq -2k<0.
\end{equation}
which contradicts the triangle inequality.
\item[Case 2: ] $d(c,y) = d(c,c') + d(c',y) = 1+ d(c',y)$.
Now by triangle inequality,
\begin{equation}
  \label{eq:f}
	 d(r,y) \leq d(r,r') + d(r',y) = 1 + d(r',y). 
\end{equation}

It follows that from \eqref{eq:f} and \eqref{eq:y} that
	\begin{align*}
		d(r,y) - d(c,y) &= d(r,y) - d(c',y) -1 \\
				  &\leq 1+d(r',y) -d(c',y)-1\\
				&\leq -k -1
	\end{align*}
which contradicts that $f(r,c) = -k$.
\end{description}
This finishes the proof of the claim. 
\end{proof}

{\noindent\bf Claim:} With the help of one additional cop, $f(r,c)$ cannot remain negative forever. Note that we only need the additional cop for finitely many rounds.
\begin{proof}
Suppose $B$ is the backbone of the vertebrate graph $H$. Since $N[B] = H$, it follows that at any point, in order to guard $H$, the cop $c$ only needs to be in $V(B)$. 
We will prove the claim by contradiction. Let $c_t, r_t$ denote the vertices occupied by the cop $c$ and the robber $r$ at round $t$, respectively. At each round,  the robber moves first.

Suppose the claim is false, i.e. there exists a $t_0$ such that after round $t_0$, the robber $r$ can move in a way that no matter how $c$ moves after $r$ makes its move, $f(r_t,c_t) = -k$ for all $t \geq t_0$ where $k$ is strictly positive.
Let $p$ be the additional cop that helps $c$. Note that every time the robber $r$ stays at it current vertex in a round, $p$ can reduce its distance to $r$ by 1.
Hence there exists $t_0'$ such that after round $t_0'$, $r$ has to move every round.
Assume $c$ moves in the same manner described in the previous claim.  Note that since by our assumption $f(r_t,c_t) =-k$ for all $t\geq t_0$,  it must happen that $f(r_{t+1}, c_t) \leq -k$ (otherwise the cop can just stay at $c_t$). By our algorithm (choice of $c'$) in the above claim, it follows that the cop will then always move to a different position.

\begin{figure}[hbt]
\begin{center}

\tikzstyle{straight edge}=[]
\tikzstyle{vertex}=[circle,fill=black,inner sep=1.5pt]
\begin{tikzpicture}[scale=0.6]
	\begin{pgfonlayer}{nodelayer}
		\node [style=vertex,label=below:{$c_{t-1}$}] (0) at (-4, 0) {};
		\node [style=vertex,label=below:{$x$}] (1) at (1, 0) {};
		\node [style=vertex,label=above:{$r_{t}$}] (2) at (1, 3) {};
		\node [style=vertex,label=above:{$r_{t-1}$}] (3) at (2, 3) {};
		\node [style=vertex,label=below:{$c_{t}$}] (4) at (-3, 0) {};
		\node [style=vertex,label=right:{$c_{t+1}$}] (5) at (-3, 1) {};
		\node [style=vertex,label=above:{$y$}] (6) at (-3, 3) {};
		\node [style=vertex,label=above:{$r_{t+1}$}] (9) at (0, 3) {};
	\end{pgfonlayer}
	\begin{pgfonlayer}{edgelayer}
		\draw [style=straight edge] (0) to (4);
		\draw [style=straight edge] (4) to (1);
		\draw [style=straight edge] (1) to (2);
		\draw [style=straight edge] (2) to (3);
		\draw [style=straight edge] (2) to (9);
		\draw [style=straight edge] (4) to (5);
		\draw [style=straight edge] (6) to (9);
		\draw [style=straight edge] (6) to (5);
		\draw [style=straight edge] (0) to (5);
	\end{pgfonlayer}
\end{tikzpicture}

\end{center}
\label{fig:backmove}
\caption{$f(r,c)$ cannot remain negative forever.}
\end{figure}
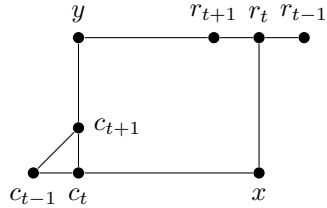
Since $H$ is finite, there will exists a moment $t$ such that $c_{t-1}, c_t, c_{t+1}$ are in the same maximal clique of $B$. Let $r_{t-1}, r_t, r_{t+1}$ be the vertices occupied by the robber accordingly. Since $f(r_{t},c_{t}) = -k$, there exists $x \in V(H)$ such that 
\begin{equation}\label{eq:ct}
d(c_t,x) = d(r_t,x) + k.
\end{equation}
Similarly, $f(r_{t+1}, c_{t+1}) = -k$, thus there exists $y \in V(H)$ such that 
\begin{equation}\label{eq:ctplus}
d(c_{t+1},y) = d(r_{t+1},y) + k.
\end{equation}
Note that by the moving strategy of the cop, $c_t, c_{t+1}$ are not in the same joint block. Now since $c_{t-1}, c_t, c_{t+1}$ are joints, pairwise adjacent, and $c_t \in N(c_{t-1},x)$, it must happen that 
\begin{equation}\label{eq:moveback}
d(c_{t+1},x) = d(c_t,x) + 1.
\end{equation}
Since $c_{t+1} \in N(c_t,y)$, clearly $d(y,c_t) \geq 2$. Thus by (P3) condition, either 
	\begin{equation}\label{eq:short1}
		d(c_{t-1},y) = 1 + d(c_t,y).
	\end{equation}
or 
	\begin{equation}\label{eq:short2} 
		d(x,y) = d(x,c_t) + d(c_t,y).
	\end{equation}

But $d(c_{t-1},y) \leq  d(c_{t-1},c_{t+1}) + d(c_{t+1},y) < 1 + d(c_t,y)$, thus \eqref{eq:short1} fails. Hence \eqref{eq:short2} must hold. Now by \eqref{eq:moveback}, we have 
\begin{align*} 
d(c_{t+1},y) +1+ d(c_t,x) & = d(x,y)\\
				 & \leq d(x, r_t) + d(r_t, r_{t+1}) + d(r_{t+1},y)\\
				&\leq d(c_t,x)-k + 1 + d(c_{t+1},y) -k\\
\end{align*}

Hence $2k\leq 0$, which contradicts that $k > 0$. By contradiction, $f(r,c)$ cannot remain negative forever.
\end{proof}
Thus after finite number of moves, $f(r,c)$ will turn non-negative and remain non-negative thereafter. It follows that $H$ is $1$-guardable.

\tikzstyle{every node} = [circle, draw, fill=white!50, inner sep = 1pt, minimum width= 2pt]

\newcommand{\drawblock}[3]{
	\pgfmathsetmacro\unitangle{360/(#1)}
	\pgfmathsetmacro\offset{1}
	\pgfmathsetmacro\edge{1/2}
	\pgfmathsetmacro\layer{(#2)-1};
	\pgfmathsetmacro\layerint{int((#2)-1)};
	\pgfmathsetmacro\outer{int(\layer)-1};
	\pgfmathsetmacro\a{int(#1)-1};
	\coordinate (z) at (0,0) {};
	
	\foreach \j in {0,...,\a}{

		node (y{\j,0}) at (\j * \unitangle: \offset) {};
		\pgfmathsetmacro\rangle{90 + \unitangle * (\j + 0.5)};

		\foreach \i in {0,...,\outer}{
			\pgfmathsetmacro\next{int(\i+1)};
			\ifthenelse{\j < #3}
				{\node[color=red] (y{\j,0}) at (\j * \unitangle: \offset) {};
				 \draw[red] (y{\j,\i}) --++ (\j * \unitangle:  \edge) node[color=red] (y{\j,\next}) {};}
				{\node (y{\j,0}) at (\j * \unitangle: \offset) {};
				\draw (y{\j,\i}) --++ (\j * \unitangle:  \edge) node (y{\j,\next}) {};}
		}

		\pgfmathsetmacro\d{#2+1};	
		\ifthenelse{\j < #3}{
			\draw[red] (y{\j,0}) to [out = 20+ \j * \unitangle, in=160+ \j * \unitangle] (\j* \unitangle: \d * \edge) node[red] () {};
		}{\draw (y{\j,0}) to [out = 20+ \j * \unitangle, in=160+ \j * \unitangle] (\j* \unitangle: \d * \edge) node () {};}

	}
	\foreach \j in {0,...,\a}{
		\pgfmathsetmacro\next{int(Mod(int(\j+1),#1))};
		\pgfmathsetmacro\minus{#3-1}
		\ifthenelse{\j < \minus}
			{\draw[red] (y{\j,\layerint}) --++ (y{\next,\layerint});}
			{\draw (y{\j,\layerint}) --++ (y{\next,\layerint});}

		\pgfmathsetmacro\jump{int(Mod(int(\j+#3),#1))};
		\foreach \i in {0,...,\outer}{
			\draw (y{\j,\i}) --++ (y{\jump,\i});
		}
	}
	 
}

\section{Multi-layer generalized Peterson Graph}
In this section, we will use the results in this paper to bound the cop number of some multi-layer generalized Peterson graphs.
A {\em generalized Peterson graph}, $GP(n,k)$ is the undirected graph having vertex set $A \cup B$, where $A = \{a_0, \ldots, a_{n-1}\}$ and $B = \{b_0, \ldots, b_{n-1}\}$, and having the following edges: $(a_i,a_{i+1}), (a_i, b_i)$ and $(b_i, b_{i+k})$ for each $i = 0, \ldots, n-1$, where indices are to be read modulo $n$.
T. Ball, et al. \cite{Ball} proved that the cop number of $GP(n,3)$ is less than or equal to three.\\

We generalize their idea to a larger structure. The {\em multi-layer generalized Peterson graph}, denoted as $MGP(n,k,t)$, is the undirected graph having vertex set $\ds\bigcup_{i=0}^{t} V_i$ where $V_j = {v_0^j, ..., v_{n-1}^j}$ and have the following edges: $(v_i^0 v_{i+1}^0), (v_i^j v_{i+k}^j)$ for each $i = 0, \dots, n-1$ where indices are to be read modulo $n$ and $j \in [t]$. Moreover, for a fixed index $i \in [n-1] \cup \{0\}$, the set $B_i = \{v_i^k, 0\leq k \leq t\}$ forms a complete graph.

\begin{center}
\begin{tikzpicture}
\drawblock{14}{3}{3}
\end{tikzpicture}

Figure 3: The multi-layer generalized Peterson graph $G(14,3,2)$.
\end{center}

\begin{center}
\begin{tikzpicture}
\drawblock{14}{3}{2}
\end{tikzpicture}

Figure 4: The multi-layer generalized Peterson graph $G(14,2,2)$.
\end{center}

Note that Peterson graph is exactly $G(5,2,1)$ and generalized Peterson graph $GP(n,k) = MGP(n,k,1)$. 

\begin{theorem}
The cop number of $MGP(n,2,t)$ is equal to $3$ for all $t \in \mathbb{N}$.
\end{theorem}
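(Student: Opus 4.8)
The plan is to prove the two inequalities $c(MGP(n,2,t)) \ge 3$ and $c(MGP(n,2,t)) \le 3$ separately, with the upper bound carrying the main weight and resting on Theorem \ref{t8}. For the lower bound I would show that two cops cannot win. The cleanest route is to observe that the map $\phi$ sending $v_i^0 \mapsto v_i^0$ and $v_i^j \mapsto v_i^1$ for every $j \ge 1$ is a (reflexive) retraction of $MGP(n,2,t)$ onto $MGP(n,2,1) = GP(n,2)$: each column clique $B_i$ collapses onto the spoke $\{v_i^0,v_i^1\}$, and every circulant edge $v_i^j v_{i+2}^j$ maps onto the inner edge $v_i^1 v_{i+2}^1$. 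Since the cop number is monotone under retractions, $c(MGP(n,2,t)) \ge c(GP(n,2))$, and $GP(n,2)$ is cubic with girth $\ge 5$ for $n$ in the relevant range, so the classical bound $c \ge \delta$ implicit in Aigner and Fromme's analysis gives $c(GP(n,2)) \ge 3$. For the few small $n$ where $GP(n,2)$ has girth below $5$ I would instead check directly that two cops cannot surround the robber on the cyclic column structure.

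For the upper bound, the key structural observation is that column cliques give cheap separators. For each $i$ set $H_i = B_i \cup B_{i+1}$ together with the single edge $v_i^0 v_{i+1}^0$ joining the two cliques. I would first verify that $H_i$ is a block graph (two cliques joined by a cut edge) and that it is \emph{isometric} in $G = MGP(n,2,t)$: because the only edge between columns $i$ and $i+1$ is the layer-$0$ edge, the distances inside $H_i$ (namely $1$, $2$, or $3$) all coincide with $d_G$. By Theorem \ref{t8} a single cop can therefore guard $H_i$. Moreover, contracting each column clique turns $G$ into the circulant $C_n(1,2)$, in which deleting two consecutive columns disconnects the cycle, since every edge spans at most two columns; hence a cop guarding $H_i$ confines the robber to one arc of columns.

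With this in hand I would run a three-cop sweep. Cop $1$ permanently guards $H_0 = B_0 \cup B_1$; this cuts the column cycle and confines the robber to the linear strip $S$ on columns $2,\dots,n-1$, whose two ends both abut the sealed block $H_0$. Cops $2$ and $3$ then leapfrog a guarding band through $S$: while one of them statically guards a band $B_j \cup B_{j+1}$ (so that, together with $H_0$, the robber is trapped in columns $\ge j+2$), the other uses Theorem \ref{t8} to establish a guard on the next band $B_{j+2} \cup B_{j+3}$; the first cop is then released and advances to the following band, and so on. Each advance shrinks the robber's territory by two columns, until it is confined to a single column clique such as $B_{n-1}$, all of whose neighbors lie in guarded columns; the free cop then walks into that clique and captures the robber on the next move.

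The hard part will be making the sweep rigorous, that is, proving that the robber can never slip past the wall during a hand-off. The point to nail down is that at every moment at least one fully established guarded band, together with $H_0$, separates the robber from the columns behind the wall: although establishing each new band takes finitely many moves (Theorem \ref{t8}), the previously established band keeps the robber confined throughout the transition, and placing consecutive bands leaves no free column in which the robber could hide between the old and new walls. A secondary but necessary ingredient is the family of isometry verifications for the bands $H_i$ in $G$, which justify each invocation of Theorem \ref{t8}; these are routine distance computations but must be done to ground the whole argument.
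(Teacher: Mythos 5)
Your lower bound is fine (the retraction onto $GP(n,2)$ plus the girth-$5$/minimum-degree bound of Aigner and Fromme is, if anything, more rigorous than the paper's own argument), and your opening move --- one cop guarding an isometric two-column band $B_i\cup B_{i+1}$, justified by Theorem \ref{t8} --- is exactly the paper's first step. The gap is in the leapfrog, and it is not the one you flagged. The danger is not that the robber slips past during a hand-off (consecutive bands are adjacent, as you correctly note); it is that the new band's guard can never be \emph{established} once all three cops are committed. Theorem \ref{t8}, as actually proven in the paper, is not a one-cop statement in the sense your sweep needs: its proof begins ``Assume at least two cops are available (the extra cop is only needed for finitely many steps).'' The helper is essential there. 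The guarding cop's potential $f(r,c)=\min_{x\in V(H)}\{d(r,x)-d(c,x)\}$ is only kept non-decreasing \emph{in reaction to} robber moves, and the argument that $f$ cannot stay negative forever requires the robber to move every round --- which is forced by the helper cop approaching whenever the robber passes. Against a robber that simply sits still, the establishing cop makes no progress. At the moment cop $3$ is to establish $H_{j+2}$, cops $1$ and $2$ are pinned on $H_0$ and $H_j$ (both are needed, since the robber's arc of columns abuts both bands), so no helper exists.

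This is fatal rather than cosmetic. The robber's counter-strategy is: sit still, so the guard on $H_{j+2}$ is never established; if cop $3$ abandons the band to threaten it, evade inside its territory, and sit again when cop $3$ returns. Evasion is possible because the robber's territory (an arc of columns) retracts onto its two-layer sub-strip (map $v_i^j\mapsto v_i^1$ for $j\geq 1$), which has girth $5$ and minimum degree $2$, hence cop number at least $2$; so cop $3$ alone can neither capture the robber there nor complete the establishment, and the game runs forever without a capture --- a win for the robber. The paper avoids this by using a different second phase: after the single band is guarded, the two free cops run the lifted pursuit of Ball et al.\ ($c_1$ matches the robber's layer and index mod $k$ and advances toward it; $c_2$ patrols layer $0$), which keeps the robber under constant threat --- this simultaneously plays the helper's role and herds the robber into the one guarded band, so only one guard ever needs to exist. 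To salvage your sweep you would need a genuinely stronger form of Theorem \ref{t8} (guard establishment by one cop with no temporary helper, against an arbitrary, possibly stationary robber), which the paper does not provide and your proposal implicitly assumes.
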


\begin{theorem}
The cop number of $MGP(n,3,t)$ is equal to $3$ for all $t \in \mathbb{N}$.
\end{theorem}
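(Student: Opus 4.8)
The plan is to prove the two inequalities $c(MGP(n,3,t))\le 3$ and $c(MGP(n,3,t))\ge 3$ separately, reserving the bulk of the work for the upper bound, where Theorem \ref{t8} does the heavy lifting. Throughout I would think of $MGP(n,3,t)$ as a cycle of $n$ \emph{columns} $B_0,\dots,B_{n-1}$, each a clique on the $t+1$ layers, where $B_i$ is joined to $B_{i+1}$ by the single layer-$0$ edge and to $B_{i+3}$ by the $t$ inner edges; contracting each column yields the circulant $C_n(1,3)$. The key structural observation I would exploit is that, because each $B_i$ is a clique, a single cop controlling a column in effect controls all of its layers, so the genuine difficulty is one-dimensional.

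For the lower bound I would show that $GP(n,3)=MGP(n,3,1)$ is a \emph{retract} of $MGP(n,3,t)$: the map sending $v_i^j\mapsto v_i^{\min(j,1)}$ (collapsing all inner layers onto layer $1$ and fixing layers $0$ and $1$) is a graph homomorphism that is the identity on the copy of $GP(n,3)$ induced by layers $0$ and $1$. Since cop number does not increase under retraction, $c(MGP(n,3,t))\ge c(GP(n,3))$. It then remains to note $c(GP(n,3))\ge 3$: for $n$ in the relevant range $GP(n,3)$ is $3$-regular of girth at least $5$, and a graph of girth at least $5$ has cop number at least its minimum degree. Combined with the bound $c(GP(n,3))\le 3$ of Ball et al., this forces equality and lifts the lower bound to $MGP(n,3,t)$.

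For the upper bound I would deploy three cops as follows. First, for any index $a$ the induced subgraph $H_a=B_a\cup B_{a+1}\cup B_{a+2}$ is a \emph{block graph}: its blocks are the three cliques together with the two layer-$0$ edges joining their layer-$0$ vertices, so by the hierarchy of Figure \ref{fig:inclusions} it is vertebrate. After checking that $H_a$ is isometric in $MGP(n,3,t)$ (the only danger is that a step-$3$ inner edge creates a shortcut, which a short case analysis on pairs of vertices of $H_a$ rules out), Theorem \ref{t8} lets one cop guard $H_a$, confining the robber permanently to the arc $B_{a+3},\dots,B_{a-1}$, which is now a linear interval walled at both ends by the guarded cop. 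I would then reduce the remaining chase to this interval: using the clique property of columns, the two free cops maintain an \emph{advancing barrier} of the same block-graph type, again guarded via Theorem \ref{t8}, whose overlap with its predecessor (barriers at $\{s,s+1,s+2\}$ and $\{s+1,s+2,s+3\}$ share two columns) prevents the robber from slipping backward as the barrier sweeps forward. Since the interval is finite, the barrier eventually pins the robber against the stationary cop on $H_a$, and it is captured.

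The main obstacle I anticipate is making the sweeping step rigorous: one must verify that while the second cop \emph{re-establishes} its guard on the shifted barrier, the robber cannot use the transition to cross the wall, and that the third cop suffices to cover the at most two layers of leakage created by the inner step-$3$ edges during a shift. Handling the cases $3\mid n$ and $3\nmid n$ (which turn the inner layers from three short cycles into one long cycle) and excluding a few small exceptional $n$ in the lower bound are secondary but necessary bookkeeping. I would expect the clique structure of the columns to tame most of these difficulties, so that the genuinely new input is entirely captured by the $1$-guardability of the block-graph barriers supplied by Theorem \ref{t8}.
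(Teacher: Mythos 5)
Your upper bound has a genuine gap in the sweeping step, and it is exactly where you anticipated trouble but not the failure mode you guessed. Theorem \ref{t8} does not give a guard for free: its guarantee is only that a guard is established \emph{after finitely many moves}, and the paper's proof of that theorem explicitly requires a \emph{temporary second cop} whose job is to force the robber to keep moving (otherwise the potential $f(r,c)=\min_{x}\{d(r,x)-d(c,x)\}$ can sit at a negative value forever). In your allocation this resource is never available after the start: when a new barrier $\{s+1,s+2,s+3\}$ is being established, one cop is pinned on $H_a$ and one is pinned on the old barrier $\{s,s+1,s+2\}$, so nobody can do the forcing. This is not a technicality. If the robber parks at $v_{s+4}^{j}$ (adjacent to the nascent barrier via the inner step-$3$ edge to $v_{s+1}^{j}$), then it is at distance $1$ from $v_{s+1}^{j}$ and at distance $2$ from $v_{s+3}^{j}$ (via $v_{s+6}^{j}$), and one checks that no single vertex of the barrier is simultaneously within distance $1$ of $v_{s+1}^{j}$ and distance $2$ of $v_{s+3}^{j}$; hence $f<0$ for \emph{every} cop position, the guard on the shifted barrier can never be certified, and the sweep stalls forever against a stationary robber. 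The paper sidesteps this entirely: it uses Theorem \ref{t8} exactly once, for the same stationary wall $H=B_1\cup B_2\cup B_3$ you use (set up at the start, when the other cops are free to assist), and then the two remaining cops run the \emph{lifted shadowing strategy} of Ball et al.: one cop matches the robber's layer and its index mod $3$ (which the robber cannot change by an inner move and can change by at most $1$ per layer-$0$ move), while the other advances along layer $0$; this actively forces the robber's index to drift monotonically into the guarded wall, with no further guard-establishment ever needed. To repair your proof you would have to show that a cop can block \emph{crossing} of the shifted barrier without achieving the full guard invariant $f\ge 0$, which is a new pursuit argument, not an application of Theorem \ref{t8}.

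Your lower bound, by contrast, is a genuinely different and mostly sound route. The paper argues directly (and rather sketchily) that two cops cannot cover the robber's three escape options on an inner layer, whereas you retract $MGP(n,3,t)$ onto the induced copy of $GP(n,3)$ via $v_i^j\mapsto v_i^{\min(j,1)}$ (a legitimate retraction in the reflexive sense used in cops-and-robbers, since collapsed clique edges map to single vertices) and invoke monotonicity of cop number under retracts plus the Aigner--Fromme bound $c(G)\ge\delta(G)$ for girth at least $5$. The one real flaw is the girth claim: when $3\mid n$ the inner edges of $GP(n,3)$ form cycles of length $n/3$, so $GP(9,3)$ has girth $3$ and $GP(12,3)$ has girth $4$; these are not degenerate parameter values and your argument gives nothing there, so they need a separate treatment rather than being dismissed as bookkeeping.
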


\begin{proof}
We will prove Theorem $9$ and $10$ together.

Consider $H =G[\ds\cup_{i=1}^k B_i]$(highlighted in red) where $k=2$ and $3$ for Theorem $9$ and $10$ respectively. Observe that $H$ is isometric and is a block graph. By Theorem 6, it follows that $H$ is 1-guardable, i.e. after finite number of rounds, it the robber enters $H$, it will be immediately captured in the next round by the cop guarding H. Suppose $c_0$ guards $H$. Consider $G\backslash H$, the remaining two cops $c_1, c_2$ will play the lifted strategy used in \cite{Ball}.

For ease of notation, if a cop $c$ lies on the vertex $v_i^j$, we say that the cop is on a vertex of layer $j$ and index $i$.
Then the strategy of the cops is as follows:
\begin{enumerate}[label=(\alph*)]
\item Move $c_1$ and $c_2$ both to $V_0$ (layer 0). Increase the index of $c_1$ by 1, and decrease the index of $c_2$ by $1$ every round. Since the robber can only increase or decrease its index by $0,1$ or $k$, there will be a moment that one of the two cops, say $c_1$, occupies a vertex whose index is congruent modulo $k$ to the index of the robber vertex. Note that the robber can change his index (mod $k$) by at most 1 every round. 
\item The robber $R$ will make its move and $c_1$ will then make a move such that it is on the same layer as $R$ while maintaining the same index (mod $k$) as R. In particular, if $R$ is already on layer $0$ in step $(a)$ and moves along layer $0$ in step (b), then $c_1$ will move in the same direction as $R$ in layer $0$. Otherwise, if $R$ is on other layers and makes any move (or pass), its index mod $k$ will not change. Hence $c_1$ can simply move to the same layer with $R$ while maintaining the same index (mod $k$) as R.
\item On subsequent rounds, $c_1$ will move in a way so that $c_1$ is always on the same layer and index (mod $k$) and whenever possible, move towards the robber. 
\item The other cop, $c_2$ always moves in $V_0$ towards R.
\end{enumerate}

Note that with $c_1, c_2$ moving in this way, after finite number of rounds, they can always force the robber to move in one direction (in terms of index) and eventually enters $H$ in which case the robber will be captured by $c_0$ in the next round. Thus, $c(MGP(n,3,t)) \leq 3$. Similarly, $c(MGP(n,2,t)) \leq 3$.\\

Note that $c(MGP(n,3,t))$ and $c(MGP(n,2,t))$ are both greater than 2. This is because if the robber stays any layer $V_i$ where $i\neq 0$, it can move to its neighbor on the left or right in the same layer, or move to its neighbor in another layer with the same index. Hence if there is a winning strategy for the robber with 2 cops, the 2 cops have to threaten all of the three possible places that the robber can escape, which is impossible.
Hence we can conclude that $c(MGP(n,3,t)) = c(MGP(n,2,t)) = 3$.

\end{proof}

\end{document}